\title{Groupoid cocycles and $K$-theory}
\author{Bram Mesland}
\date\today
\keywords{Groupoid $C^{*}$-algebras, $KK$-theory}
\address{Department of Mathematics, Utrecht University\\Budapestlaan 6\\3584 CD Utrecht\\The Netherlands}
\email{B.Mesland@uu.nl}
\theoremstyle{plain}
\newtheorem{theorem}{Theorem}[subsection]
\newtheorem{corollary}[theorem]{Corollary}
\newtheorem{proposition}[theorem]{Proposition}
\newtheorem{lemma}[theorem]{Lemma}
\theoremstyle{definition}
\newtheorem{definition}[theorem]{Definition}
\DeclareFontFamily{OT1}{pzc}{}
\DeclareFontShape{OT1}{pzc}{m}{it}{<-> s * [1.20] pzcmi7t}{}
\DeclareMathAlphabet{\mathpzc}{OT1}{pzc}{m}{it}
\newcommand{\R}{\mathbb{R}}
\newcommand{\Z}{\mathbb{Z}}
\newcommand{\N}{\mathbb{N}}
\newcommand{\K}{\mathbb{K}}
\newcommand{\C}{\mathbb{C}}
\newcommand{\Aut}{\textnormal{Aut}}
\newcommand{\im}{\mathfrak{Im}}
\newcommand{\id}{\textnormal{id}}
\newcommand{\Dom}{\mathfrak{Dom}}
\newcommand{\tildeotimes}{\tilde{\otimes}}
\newcommand{\lrh}{\leftrightharpoons}
\newcommand{\End}{\textnormal{End}}
\newcommand{\Endst}{\textnormal{End}^{*}}
\newcommand{\supp}{\textnormal{supp }}
\newcommand{\Fin}{\textnormal{Fin}}
\newcommand{\Ind}{\textnormal{Ind}}
\begin{document}

\begin{abstract}
Let $c:\mathcal{G}\rightarrow\R$ be a cocycle on a locally compact Hausdorff groupoid $\mathcal{G}$ with Haar system,  and $\mathcal{H}$ the subgroupoid $\ker c\subset \mathcal{G}$. Under some mild conditons (satisfied by e.g. all integral cocycles on an \'{e}tale groupoid), $c$ gives rise to an unbounded odd $\R$-equivariant bimodule $(\mathpzc{E},D)$ for the pair of $C^{*}$-algebras $(C^{*}(\mathcal{G}),C^{*}(\mathcal{H}))$. If the cocycle comes from a continuous quasi-invariant measure on the unit space $\mathcal{G}^{(0)}$, the corresponding element $[(\mathpzc{E},D)]$ in $KK_{1}(C^{*}(\mathcal{G}),C^{*}(\mathcal{H}))$  gives rise to an index map $K_{1}(C^{*}(\mathcal{G}))\rightarrow \C$.\newline\newline
Keywords: Groupoid $C^{*}$-algebras; $KK$-theory.
\end{abstract}
\maketitle
\tableofcontents
\section*{Introduction}
Groupoid $C^{*}$-algebras \cite{Re} form a rich class of $C^{*}$-algebras, including group $C^{*}$-algebras, crossed products, graph, Cuntz- and Cuntz-Krieger algebras, and the $C^{*}$-algebras considered in foliation theory. The (now classical) Gel'fand-Naimark theorem tells us that $C^{*}$-algebras can be viewed as noncommutative locally compact Hausdorff topological spaces. Groupoids can be viewed as an intermediate structure, allowing for a topological description of noncommutative $C^{*}$-algebras.\newline\newline
A generalization of manifolds to the realm of operator algebras, is Connes' theory of spectral triples \cite{Con}. These are the unbounded cycles for $K$-homology, and their bivariant version \cite{BJ} can be used to give a description of Kasparov's $KK$-theory \cite{Kas}. An unbounded $KK$-cycle $(\mathpzc{E},D)$ for a pair of separable $C^{*}$-algebras consists of a $C^{*}$-module $\mathpzc{E}$ over $B$, which is also a left module over $A$, and an unbounded regular operator $D$ with compact resolvent (in the $C^{*}$-module sense). Moreover, the subalgebra $\mathcal{A}\subset A$ of elements for which the commutator $[D,a]$ extends to an endomorphism of $\mathpzc{E}$ is required to be dense in $A$.\newline\newline
In this paper we construct odd equivariant unbounded cycles for groupoid $C^{*}$-algebras, coming from a continuous 1-cocycle $c:\mathcal{G}\rightarrow \R$, satisfying some mild technical conditions. In case the groupoid is \'{e}tale, and the image of the cocycle is a discrete subgroup of $\R$, these conditions are automatically satisfied. This provides us with a lot of examples, including all Cuntz-Krieger algebras and crossed products of a commutative algebra by $\Z$. In particular, we obtain the noncommutative torus as a ''fibration" over the circle (section 3.5). \newline\newline
Renault \cite{Re} has shown that continuous 1-cocycles give rise to a 1-parameter group of automorphisms of $C^{*}(\mathcal{G})$. We consider the generator $D$ of this group, viewed as an operator in the $C^{*}$-module completion of $C_{c}(\mathcal{G})$ over $C_{c}(\mathcal{H})$, where the closed subgroupoid $\mathcal{H}=\ker c$. This gives rise to an odd $\R$-equivariant unbounded bimodule $(\mathpzc{E},D)$ for the pair of $C^{*}$-algebras $(C^{*}(\mathcal{G}),C^{*}(\mathcal{H}))$.\newline\newline
The appearance of the auxiliary $C^{*}$-algebra $C^{*}(\mathcal{H})$ might seem undesirable at first, when ones purpose is to study $C^{*}(\mathcal{G})$. In many cases of interest, e.g. when the cocycle comes from a quasi invariant measure on the unit space $\mathcal{G}^{0}\subset\mathcal{G}$, the algebra $C^{*}(\mathcal{H})$ carries a canonical trace $\tau$, inducing a homomorphism \[\tau_{*}:K_{0}(C^{*}(\mathcal{H}))\rightarrow\C.\]
Composing this with the homomorphism
\[K_{1}(C^{*}(\mathcal{G}))\xrightarrow{\otimes_{[D]}} K_{0}(C^{*}(\mathcal{H})),\]
coming from the Kasparov product with the above mentioned cycle, yields and index map $K_{1}(C^{*}(\mathcal{G}))\rightarrow\C$.\newline\newline
We will also relate our work to a construction of $KK$-cycles from circle actions in \cite{CNNR}, by showing that the technical condition appearing there is automatically satisfied when the circle action comes from a cocycle. \newline\newline
In order to make the paper readable for the non-specialist, the first part of the paper is devoted to a review of the necessary concepts concerning unbounded selfadjoint operators on $C^{*}$-modules, as well as those from the theory of locally compact Hausdorff groupoids, their actions, and their $C^{*}$-algebras.
\section*{Acknowledgements}
This research was done as part of my Ph.D. thesis during my stay at the Max Planck Institute for Mathematics in Bonn, Germany. It was finished at Utrecht University, the Netherlands. I am thankful to these institutions for their financial support.

I am indebted to Alan Carey, Eli Hawkins, Klaas Landsman, Matilde Marcolli, Ryszard Nest, Jean Renault and Georges Skandalis for useful conversations and/or correspondence. I thank Walter van Suijlekom for a detailed discussion of the example in section 3.5.
\section{$KK$-theory via unbounded operators}
Kasparovs $KK$-theory \cite{Kas} is a central tool in operator $K$-theory. It associates a $\Z/2$-graded group $KK_{*}(A,B)$ to any pair of separable $C^{*}$-algebras $(A,B)$. 
\subsection{Equivariant $C^{*}$-modules}
The central objects one deals with in $KK$-theory are $C^{*}$-modules. A basic reference for these objects is \cite{Lan}. Recall that the \emph{strict topology} on the endomorphisms $\End (V)$ of a normed linear space $V$ is given by pointwise norm convergence, i.e. $\phi_{n}\rightarrow \phi$ in $\End(V)$ if and only if \[\|\phi_{n}(v)-\phi(v)\|\rightarrow 0\] for all $v\in V$. We will always consider $\Aut(B)$, the automorphisms of a $C^{*}$-algebra $B$, with this topology.
\begin{definition}\label{C*mod} Let $G$ be a second countable locally compact group. A $C^{*}$-algebra $B$ is a $G$-\emph{algebra} if there is a continuous homomorphism $G\rightarrow \Aut(B)$. A \emph{right} $C^{*}$-$B$-\emph{module} is a complex vector space $\mathpzc{E}$ which is also a right $B$-module, and a bilinear pairing
\[\begin{split}\mathpzc{E}\times\mathpzc{E} &\rightarrow B \\
(e_{1},e_{2}) &\mapsto \langle e_{1},e_{2}\rangle, \end{split}\]
such that
\begin{itemize}\item $\langle e_{1},e_{2}\rangle=\langle e_{2},e_{1}\rangle^{*},$
\item $\langle e_{1},e_{2}b\rangle=\langle e_{1},e_{2}\rangle b,$ 
\item $\langle e,e\rangle\geq 0$ and $\langle e,e\rangle=0\Leftrightarrow e=0,$\item $\mathpzc{E}$ is complete in the norm $\|e\|^{2}:=\|\langle e,e\rangle\|.$\end{itemize}
If $B$ is a $G$-algebra, such $\mathpzc{E}$ is a $G$-\emph{module} if it comes equipped with a strictly continuous $G$-action satisfying \begin{itemize}\item $g(eb)=(ge)gb$ \item $\langle ge_{1},ge_{2}\rangle=g\langle e_{1},e_{2}\rangle$.\end{itemize}
We use the notation  $\mathpzc{E}\leftrightharpoons B$ to indicate this structure. 
\end{definition}
It turns out that a $B$-linear endomorphism of a $C^{*}$-module $\mathpzc{E}\leftrightharpoons B$ does not always admit an adjoint. However, requiring the existence of an adjoint is enough to obtain a number of other desirable properties. Let \[\Endst_{B}(\mathpzc{E}):=\{T:\mathpzc{E}\rightarrow\mathpzc{E}:\quad\exists T^{*}:\mathpzc{E}\rightarrow\mathpzc{E}, \quad\langle Te_{1},e_{2}\rangle=\langle e_{1},T^{*}e_{2}\rangle\}.\]
Elements of $\Endst_{B}(\mathpzc{E})$ are called \emph{adjointable
operators}.
\begin{proposition}Let $T\in\Endst_{B}(\mathpzc{E})$. Then $T$ is linear, bounded, and a $B$-module morphism. Moreover, $\Endst_{B}(\mathpzc{E})$ is a $C^{*}$-algebra in the operator norm 
and the involution $T\mapsto T^{*}$.\end{proposition}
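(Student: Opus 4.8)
The plan is to exploit nondegeneracy of the $B$-valued inner product throughout: the defining relation $\langle Te_1, e_2\rangle=\langle e_1,T^*e_2\rangle$ together with the fact that $\langle x,e\rangle=0$ for all $e$ forces $x=0$ (take $e=x$ and use the third axiom) lets me transfer identities from the inner product, where they are easy, back to the module. First I would show $T$ is additive, $\C$-linear and $B$-linear. For $B$-linearity, for arbitrary $e,f\in\mathpzc{E}$ and $b\in B$,
\[\langle T(eb),f\rangle=\langle eb,T^*f\rangle=b^*\langle e,T^*f\rangle=b^*\langle Te,f\rangle=\langle (Te)b,f\rangle,\]
using the adjoint relation twice together with $\langle xb,y\rangle=b^*\langle x,y\rangle$ (a consequence of conjugate-symmetry and second-variable $B$-linearity). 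Since this holds for all $f$, nondegeneracy gives $T(eb)=(Te)b$; additivity and $\C$-linearity follow by the same device.

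Next comes the main obstacle, boundedness, which I would obtain from the closed graph theorem. For this I first check that the graph of $T$ is closed: if $e_n\to e$ and $Te_n\to g$, then for every $f$,
\[\langle g-Te,f\rangle=\lim_n\langle Te_n,f\rangle-\langle e,T^*f\rangle=\lim_n\langle e_n,T^*f\rangle-\langle e,T^*f\rangle=0,\]
using norm-continuity of the inner product in each variable, so that nondegeneracy gives $g=Te$. Since $\mathpzc{E}$ is complete and $T$ is linear with closed graph, the closed graph theorem yields boundedness. The only auxiliary point is norm-continuity of the inner product, which follows from the Cauchy--Schwarz inequality $\|\langle x,y\rangle\|\le\|x\|\,\|y\|$ for Hilbert $C^{*}$-modules; I would cite this from \cite{Lan} or prove it separately via positivity of $\langle xb-y,xb-y\rangle$ for suitable $b\in B$.

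Having shown every adjointable operator is a bounded $B$-module map, I would verify the $C^{*}$-algebra axioms. Uniqueness of the adjoint is immediate from nondegeneracy, and the routine computations give $(S+T)^*=S^*+T^*$, $(\lambda T)^*=\bar\lambda T^*$, $(ST)^*=T^*S^*$ and $(T^*)^*=T$, so that $\Endst_{B}(\mathpzc{E})$ is a $*$-subalgebra of the bounded linear operators. For completeness I would take a sequence $(T_n)$ that is Cauchy in operator norm; it converges to some bounded $T$, and since $\|T_n^*-T_m^*\|=\|T_n-T_m\|$ the sequence $(T_n^*)$ converges to some $S$. Passing to the limit in $\langle T_n x,y\rangle=\langle x,T_n^*y\rangle$ shows $S=T^*$, so $T$ is adjointable; thus $\Endst_{B}(\mathpzc{E})$ is norm-closed in the bounded operators and hence complete.

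Finally, for the $C^{*}$-identity I would again use Cauchy--Schwarz:
\[\|Te\|^2=\|\langle Te,Te\rangle\|=\|\langle e,T^*Te\rangle\|\le\|e\|\,\|T^*Te\|\le\|e\|^2\,\|T^*T\|,\]
which gives $\|T\|^2\le\|T^*T\|\le\|T^*\|\,\|T\|$, hence $\|T\|\le\|T^*\|$; the symmetric inequality applied to $T^*$ yields $\|T^*\|=\|T\|$ and therefore $\|T^*T\|=\|T\|^2$. I expect the genuinely substantive input to be the module Cauchy--Schwarz inequality, on which both boundedness and the $C^{*}$-identity rest; everything else is a transfer of algebraic identities across the nondegenerate pairing.
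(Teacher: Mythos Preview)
The paper does not supply a proof of this proposition: it appears in the review section on $C^{*}$-modules and is stated as a standard fact, with \cite{Lan} given as the basic reference. Your argument is correct and is essentially the textbook proof one finds there---nondegeneracy to transfer module-linearity, the closed graph theorem for boundedness, and Cauchy--Schwarz for the $C^{*}$-identity---so there is nothing to compare against beyond noting that your write-up matches the standard treatment the paper is implicitly invoking.
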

If $\mathpzc{E}$ is a $G$-module, there is a strictly continuous $G$-action on $\Endst_{B}(\mathpzc{E})$, given by $g:T\mapsto gTg^{-1}$. That is, $\Endst_{B}(\mathpzc{E})$ is a $G$-algebra. If $A$ is another $C^{*}$-algebra, an $(A,B)$-\emph{bimodule} is a $C^{*}$-$B$-module $\mathpzc{E}$ together with a *-homomorphism $A\rightarrow\Endst_{B}(\mathpzc{E})$. This structure is denoted
\[A\rightarrow\mathpzc{E}\lrh B.\]
 Such a bimodule is called $\emph{equivariant}$ if it is a $G$-module, $A$ is a $G$-algebra and $a(ge)=(ga)e$. It is called a \emph{correspondence} if the $A$ representation is \emph{essential}, that is, if $A\mathpzc{E}$ is dense in $\mathpzc{E}$.\newline

Note
that the involution on $B$ allows for considering $\mathpzc{E}$ as a left
$B$-module via $be:=eb^{*}$. The inner product can be used to turn the algebraic tensor product
$\mathpzc{E}\otimes_{B}\mathpzc{E}$ into a $*$-algebra:
\[ e_{1}\otimes e_{2}\circ f_{1}\otimes f_{2}:=e_{1}\langle
e_{2},f_{1}\rangle\otimes f_{2},\quad (e_{1}\otimes e_{2})^{*}:=e_{2}\otimes
e_{1}.\] This algebra is denoted by $\Fin_{B}(\mathpzc{E}).$
There is an injective *-homomorphism
\[\Fin_{B}(\mathpzc{E})\rightarrow\Endst_{B}(\mathpzc{E}),
\]
given by $e_{1}\otimes e_{2}(e):=e_{1}\langle e_{2},e\rangle$. The closure of
$\Fin_{B}(\mathpzc{E})$ in the operator norm is the $C^{*}$-algebra of
$B$-\emph{compact operators} on $\mathpzc{E}$. It is denoted by
$\mathbb{K}_{B}(\mathpzc{E})$. If $\mathpzc{E}$ is a $G$-module, then the $G$-action that $\K_{B}(\mathpzc{E})$ inherits from $\Endst_{B}(\mathpzc{E})$ is norm continuous. 
Two $C^{*}$ algebras $A$ and $B$ are said to be \emph{strongly Morita equivalent} if there exists a $C^{*}$-module $\mathpzc{E}\leftrightharpoons B$ such that $A\cong\mathbb{K}_{B}(\mathpzc{E})$. The bimodule $A\rightarrow\mathpzc{E}\lrh B$ is called a \emph{Morita equivalence bimodule}. A Morita equivalence bimodule is in particular a correspondence.
\subsection{Unbounded regular operators}
Similar to the Hilbert space setting, there is a notion of unbounded operator on
a $C^{*}$-module.We refer to \cite{Baaj}, \cite{Lan} and \cite{Wor} for detailed
expositions of this theory.
\begin{definition}[\cite{BJ}] Let $\mathpzc{E}$ be a
$C^{*}$-$B$-module. A densely defined closed operator
$D:\mathfrak{Dom}D\rightarrow\mathpzc{E}$ is called
\emph{regular} if 
\begin{itemize} \item $D^{*}$ is densely defined in $\mathpzc{E}$;
\item $1+D^{*}D$ has dense range.
\end{itemize}\end{definition}
Such an operator is automatically $B$-linear, and $\mathfrak{Dom}D$ is a $B$-submodule of $\mathpzc{E}$. 
There are two operators,
$\mathfrak{r}(D),\mathfrak{b}(D)\in\Endst_{B}(\mathpzc{E})$
canonically associated with a regular operator $D$. They are the \emph{resolvent} of $D$
\begin{equation} \mathfrak{r}(D):=(1+D^{*}D)^{-\frac{1}{2}},\end{equation}
and the \emph{bounded transform}
\begin{equation}\mathfrak{b}(D):=D(1+D^{*}D)^{-\frac{1}{2}}.\end{equation}
To construct selfadjoint regular operators in practice, we include some remarks and results on the extension of symmetric regular operators. 
A densely defined operator $D$ in a
$C^{*}$-module $\mathpzc{E}$ is \emph{symmetric} if, for $e,e'\in\mathfrak{Dom}D$ we have $\langle De,e'\rangle=\langle e,De'\rangle$. Symmetric operators are closable, and their closure is again symmetric. Hence we will
tacitly assume all symmetric operators to be closed. 
\begin{lemma}[\cite{Lan}]\label{closedrange} Let $D$ be a densely defined symmetric operator. Then the operators $D+i$ and $D-i$ are injective and have closed range. 
\end{lemma}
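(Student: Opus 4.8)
The plan is to establish a single norm estimate $\|(D\pm i)e\|\geq\|e\|$ for every $e\in\Dom D$, from which both injectivity and closedness of the range follow by routine arguments.

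First I would carry out the key algebraic computation. For $e\in\Dom D$, expand the $B$-valued inner product $\langle(D\pm i)e,(D\pm i)e\rangle$ using sesquilinearity, remembering that $\langle\cdot,\cdot\rangle$ is conjugate-linear in the first and linear in the second variable. This produces the diagonal terms $\langle De,De\rangle+\langle e,e\rangle$ together with the cross terms $\pm i\langle De,e\rangle\mp i\langle e,De\rangle$. Here the hypothesis of symmetry enters decisively: since $\langle De,e\rangle=\langle e,De\rangle$, the cross terms cancel identically, leaving
\[\langle(D\pm i)e,(D\pm i)e\rangle=\langle De,De\rangle+\langle e,e\rangle.\]

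Next I would pass from this $B$-valued identity to a scalar norm estimate. Both $\langle De,De\rangle$ and $\langle e,e\rangle$ are positive elements of $B$, so $\langle e,e\rangle\leq\langle De,De\rangle+\langle e,e\rangle$ in the order on $B$. The main (and only genuinely $C^{*}$-algebraic) point is to invoke monotonicity of the norm on positive elements, i.e. $0\leq a\leq a'$ implies $\|a\|\leq\|a'\|$; this is precisely where the argument departs from the scalar Hilbert space case, in which the estimate would be immediate. Applying this together with the defining relation $\|x\|^{2}=\|\langle x,x\rangle\|$ yields
\[\|(D\pm i)e\|^{2}=\|\langle De,De\rangle+\langle e,e\rangle\|\geq\|\langle e,e\rangle\|=\|e\|^{2},\]
hence $\|(D\pm i)e\|\geq\|e\|$.

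Finally I would deduce the two conclusions. Injectivity is immediate, since $(D\pm i)e=0$ forces $\|e\|=0$, so $e=0$. For closedness of the range, suppose $(D\pm i)e_{n}\to f$; then the sequence $\bigl((D\pm i)e_{n}\bigr)$ is Cauchy, and the estimate $\|e_{n}-e_{m}\|\leq\|(D\pm i)(e_{n}-e_{m})\|$ shows that $(e_{n})$ is Cauchy as well, say $e_{n}\to e$. Since $De_{n}=(D\pm i)e_{n}\mp ie_{n}\to f\mp ie$, the closedness of $D$ gives $e\in\Dom D$ with $(D\pm i)e=f$, so $f$ lies in the range. I expect no serious obstacle in this last step; the only places demanding genuine care are the positivity argument in $B$ and the cancellation of cross terms, which is exactly the content of the symmetry hypothesis.
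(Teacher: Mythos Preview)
Your proof is correct and is exactly the standard argument; the paper does not supply its own proof but cites Lance \cite{Lan}, where the lemma is established in precisely this way (the identity $\langle(D\pm i)e,(D\pm i)e\rangle=\langle De,De\rangle+\langle e,e\rangle$ followed by the bounded-below estimate and closedness of $D$). Your care with the positivity/monotonicity step in $B$ and your explicit use of the standing assumption that $D$ is closed are both appropriate.
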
 We can now define two isometries
\[\mathfrak{u}_{+}(D):=(D+i)\mathfrak{r}(D),\quad \mathfrak{u}_{-}(D):=(D-i)\mathfrak{r}(D),\]
and the \emph{Cayley transform} of $D$ is
\begin{equation}\label{Cayley}\mathfrak{c}(D):=\mathfrak{u}_{-}(D)\mathfrak{u}_{+}(D)^{*}.\end{equation}
In general, $\mathfrak{c}(D)$ is a partial isometry, with closed range. The operator $D$ can be recoverd from $\mathfrak{c}(D)$ by the formulas
\[\begin{split}\mathfrak{Dom}(D)&=\mathfrak{Im}(1-\mathfrak{c}(D))\mathfrak{c}(D)^{*}\\
D(1-\mathfrak{c}(D))\mathfrak{c}(D)^{*}e &=i(1+\mathfrak{c}(D))\mathfrak{c}^{*}(D)e.\end{split}\]
\begin{theorem}[\cite{Lan}]The Cayley transform $\mathfrak{c}$ furnishes a bijection between the set of symmetric regular operators in $\mathpzc{E}$ and the set of partial isometries $c\in\End_{B}^{*}(\mathpzc{E})$ with
the property that $(1-c)c^{*}$ has dense range. Moreover, $D'$ is an extension of $D$ if and and only if $\mathfrak{c}(D')$ is an extension of $\mathfrak{c}(D)$.
\end{theorem}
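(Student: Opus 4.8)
The plan is to show that $\mathfrak{c}$ is a well-defined injection onto the stated set of partial isometries, and then to invert the construction. \emph{Step 1: $\mathfrak{u}_\pm(D)$ are everywhere-defined isometries.} Since $D$ is regular, $D^{*}D$ is selfadjoint and positive, so $\mathfrak{r}(D)=(1+D^{*}D)^{-1/2}$ is bounded, positive and injective with $\operatorname{ran}\mathfrak{r}(D)=\Dom D$ (the polar-decomposition identity $\Dom|D|=\Dom D$). Hence $\mathfrak{u}_{\pm}(D)=(D\pm i)\mathfrak{r}(D)=\mathfrak{b}(D)\pm i\,\mathfrak{r}(D)$ are defined on all of $\mathpzc{E}$. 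Expanding $\langle(D\pm i)x,(D\pm i)x\rangle$ and using symmetry $\langle Dx,x\rangle=\langle x,Dx\rangle$ to kill the cross terms gives $\langle(D\pm i)x,(D\pm i)x\rangle=\langle(1+D^{*}D)x,x\rangle$; setting $x=\mathfrak{r}(D)e$ yields $\mathfrak{u}_{\pm}(D)^{*}\mathfrak{u}_{\pm}(D)=1$.

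\emph{Step 2: $\mathfrak{c}(D)$ is a partial isometry of the right type, and $\mathfrak{c}$ is injective.} As adjointable isometries, the $\mathfrak{u}_{\pm}$ have adjointable final projections $\mathfrak{u}_{\pm}\mathfrak{u}_{\pm}^{*}$, whose ranges Lemma \ref{closedrange} identifies with the closed submodules $\operatorname{ran}(D\pm i)$. Thus $\mathfrak{c}(D)=\mathfrak{u}_{-}\mathfrak{u}_{+}^{*}$ is a partial isometry with $\mathfrak{c}(D)^{*}\mathfrak{c}(D)=\mathfrak{u}_{+}\mathfrak{u}_{+}^{*}$. Using $\mathfrak{u}_{+}-\mathfrak{u}_{-}=2i\,\mathfrak{r}(D)$ and $\mathfrak{u}_{+}+\mathfrak{u}_{-}=2\,\mathfrak{b}(D)$ I compute $(1-\mathfrak{c}(D))\mathfrak{c}(D)^{*}=(\mathfrak{u}_{+}-\mathfrak{u}_{-})\mathfrak{u}_{-}^{*}=2i\,\mathfrak{r}(D)\mathfrak{u}_{-}^{*}$ and $(1+\mathfrak{c}(D))\mathfrak{c}(D)^{*}=2\,\mathfrak{b}(D)\mathfrak{u}_{-}^{*}$. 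As $\mathfrak{u}_{-}^{*}$ is surjective and $\mathfrak{r}(D)$ has dense range, the first of these has range $\operatorname{ran}\mathfrak{r}(D)=\Dom D$, which is dense, giving the density condition; comparing the two identities gives $D\,(1-\mathfrak{c}(D))\mathfrak{c}(D)^{*}=i\,(1+\mathfrak{c}(D))\mathfrak{c}(D)^{*}$, i.e. the stated recovery formula, whence $\mathfrak{c}$ is injective.

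\emph{Step 3: surjectivity.} Given a partial isometry $c$ with $(1-c)c^{*}$ of dense range, I define $D$ on $\Dom D:=\operatorname{ran}((1-c)c^{*})$ by $D(1-c)c^{*}e:=i(1+c)c^{*}e$. To see this is well defined, suppose $z$ lies in the initial space of $c$ with $cz=z$; then $c^{*}z=z$, so $\langle(1-c)c^{*}e,z\rangle=\langle c^{*}e,z\rangle-\langle c^{*}e,c^{*}z\rangle=0$ for all $e$, and density of the range forces $z=0$. Hence $1-c$ is injective on the initial space, $D$ is well defined and densely defined, and symmetry of $D$ follows from a direct manipulation with the relations $cc^{*}c=c$ and $c^{*}cc^{*}=c^{*}$. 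A computation analogous to Steps 1--2 then returns $\mathfrak{c}(D)=c$.

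\emph{Main obstacle and the extension statement.} The delicate point, and the one I expect to cost the most work, is the regularity of the operator reconstructed in Step 3: unlike in the Hilbert-space case regularity is a genuine hypothesis, and showing that $1+D^{*}D$ has dense range relies precisely on the complementability of the closed ranges furnished by Lemma \ref{closedrange} together with the adjointability of the projections $cc^{*},c^{*}c$ attached to the partial isometry. The same circle of ideas underlies the clean behaviour of $\mathfrak{r}(D)$ and $\mathfrak{b}(D)$ for merely symmetric (rather than selfadjoint) regular operators used in Step 1. Finally, the extension statement is read off from the explicit correspondence: the initial and final spaces of $\mathfrak{c}(D)$ are $\operatorname{ran}(D+i)$ and $\operatorname{ran}(D-i)$, and these increase exactly when $\Dom D$ is enlarged compatibly, so $D\subseteq D'$ holds if and only if $\mathfrak{c}(D)\subseteq\mathfrak{c}(D')$.
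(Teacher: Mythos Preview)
The paper does not prove this theorem: it is quoted from Lance's book \cite{Lan} as background and no argument is supplied in the text. There is therefore nothing in the paper to compare your proposal against.

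That said, your sketch is essentially the standard proof one finds in \cite{Lan}. The computations in Steps~1 and~2 are correct; in particular the identities $(1\mp\mathfrak{c}(D))\mathfrak{c}(D)^{*}=(\mathfrak{u}_{+}\mp\mathfrak{u}_{-})\mathfrak{u}_{-}^{*}$ and $\mathfrak{u}_{+}-\mathfrak{u}_{-}=2i\,\mathfrak{r}(D)$ give both the density claim and the recovery formula. Your well-definedness argument in Step~3 is also fine: for a partial isometry one has $c^{*}=c^{*}cc^{*}$, so $c^{*}e$ always lies in the initial space, and you correctly show that $1-c$ has no fixed vectors there. You are right that the only genuinely nontrivial point is the \emph{regularity} of the reconstructed operator, which in the $C^{*}$-module setting is not automatic; Lance handles this via the graph characterisation of regularity (the graph of a regular operator is a complemented submodule of $\mathpzc{E}\oplus\mathpzc{E}$), and the complementability is supplied precisely by the adjointable projections $cc^{*}$ and $c^{*}c$. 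Your identification of this as the main obstacle is accurate, though in a full write-up you would need to carry it out rather than gesture at it.
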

For a selfadjoint regular operator $D$, $1+D^{2}$ has dense range.Therefore by lemma \ref{closedrange}, the operators $D+i$ and $D-i$ are bijective.
\begin{corollary}\label{symself} A symmetric regular operator $D$  is selfadjoint if and only if $\mathfrak{c}(D)$ is unitary if and only if $D+i$ and $D-i$ have dense range.
\end{corollary}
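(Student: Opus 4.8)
The plan is to prove the three-fold equivalence by routing everything through the single intermediate condition that $D+i$ and $D-i$ both have dense range: I first establish the purely operator-algebraic equivalence of this condition with unitarity of $\mathfrak{c}(D)$, and then its analytic equivalence with selfadjointness, where one direction is already recorded in the remark preceding the statement.

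For the equivalence with unitarity, I would begin by checking that $\mathfrak{u}_{+}(D)$ and $\mathfrak{u}_{-}(D)$ are genuine isometries. For $x\in\Dom D$ the symmetry of $D$ makes the cross terms in $\langle(D\pm i)x,(D\pm i)x\rangle$ cancel, leaving $\langle Dx,Dx\rangle+\langle x,x\rangle=\langle(1+D^{*}D)^{\frac{1}{2}}x,(1+D^{*}D)^{\frac{1}{2}}x\rangle$, and the normalization by $\mathfrak{r}(D)=(1+D^{*}D)^{-\frac{1}{2}}$ then gives $\mathfrak{u}_{\pm}(D)^{*}\mathfrak{u}_{\pm}(D)=1$. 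Since the range of $\mathfrak{r}(D)$ is $\Dom D$ (a consequence of regularity), the range projections $\mathfrak{u}_{\pm}(D)\mathfrak{u}_{\pm}(D)^{*}$ are exactly the projections onto $\overline{\mathfrak{Im}(D\pm i)}$. Using $\mathfrak{u}_{\pm}(D)^{*}\mathfrak{u}_{\pm}(D)=1$ one then computes
\[\mathfrak{c}(D)^{*}\mathfrak{c}(D)=\mathfrak{u}_{+}(D)\mathfrak{u}_{+}(D)^{*},\qquad \mathfrak{c}(D)\mathfrak{c}(D)^{*}=\mathfrak{u}_{-}(D)\mathfrak{u}_{-}(D)^{*},\]
so that $\mathfrak{c}(D)$ is unitary precisely when both of these projections equal the identity, i.e. precisely when $\mathfrak{Im}(D+i)$ and $\mathfrak{Im}(D-i)$ are dense.

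It remains to identify this density condition with selfadjointness. The forward implication is exactly the remark preceding the statement: if $D=D^{*}$, then $1+D^{2}=1+D^{*}D$ has dense range, and since $1+D^{2}=(D+i)(D-i)=(D-i)(D+i)$ both $\mathfrak{Im}(D\pm i)$ contain this dense set. For the converse I would use Lemma \ref{closedrange} to upgrade density to surjectivity, as $D\pm i$ already have closed range. Given $v\in\Dom D^{*}$, I solve $(D+i)u=(D^{*}+i)v$ for some $u\in\Dom D$; since $D\subseteq D^{*}$, the difference satisfies $(D^{*}+i)(v-u)=0$. The adjoint identity $\langle(D-i)u,v\rangle=\langle u,(D^{*}+i)v\rangle$ shows that any element of $\ker(D^{*}+i)$ pairs to zero with all of $\mathfrak{Im}(D-i)=\mathpzc{E}$, hence vanishes by definiteness of the inner product; thus $v=u\in\Dom D$, giving $\Dom D^{*}\subseteq\Dom D$ and $D=D^{*}$.

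The step that requires the most care, and which I expect to be the main obstacle, is the transfer of the classical deficiency-space argument to the $C^{*}$-module setting. There is no orthogonal decomposition available for free, so the vanishing of $\ker(D^{*}+i)$ must be extracted directly from $\langle v,w\rangle=0$ for all $w$ together with positive definiteness of the $B$-valued pairing; and one must verify throughout that $\mathfrak{r}(D)$ is adjointable with range a core for $D$, that $D^{*}$ is densely defined, and that $1+D^{*}D$ has dense range. All of these rest essentially on the regularity hypothesis rather than being automatic as in the Hilbert space case.
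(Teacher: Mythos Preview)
Your argument is correct. The paper itself does not supply a proof of this corollary: it is stated immediately after the Cayley transform bijection theorem (cited from \cite{Lan}), with only the one-line remark preceding it that for selfadjoint $D$ the operator $1+D^{2}$ has dense range and hence $D\pm i$ are bijective. The intended reading is that the three-fold equivalence follows formally from that theorem together with Lemma~\ref{closedrange}, but the details are left to the reader.

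Your write-up is precisely such a filling-in. The computation $\mathfrak{c}(D)^{*}\mathfrak{c}(D)=\mathfrak{u}_{+}\mathfrak{u}_{+}^{*}$, $\mathfrak{c}(D)\mathfrak{c}(D)^{*}=\mathfrak{u}_{-}\mathfrak{u}_{-}^{*}$ is the standard way to extract the equivalence of unitarity with density of both ranges, and your deficiency-space argument for ``$D\pm i$ surjective $\Rightarrow D=D^{*}$'' is exactly what one does in Lance's treatment (and is the only step where the $C^{*}$-module subtleties you flag---definiteness of the inner product in place of orthogonal complements, and regularity in place of the Hilbert space functional calculus---actually matter). So there is no genuine divergence from the paper; you have simply made explicit what the paper records as an immediate corollary.
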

\subsection{$KK$-theory}
The theory was originally described by Kasparov \cite{Kas} using adjointable (and hence bounded) endomorphisms of $C^{*}$-modules. Later Connes \cite{Con} and Baaj-Julg \cite{BJ} gave a description of the cycles of K-homology and $KK$-theory using unbounded endomorphisms. It is this formulation that we will employ here.
The main references for the conventional (i.e. bounded) approach to $KK$-theory are Kasparov's original papers \cite{Kas, Kas2}. Given a locally compact group $G$, a $\Z/2$-graded bifunctor $KK^{G}_{*}$ is constructed, on pairs $(A,B)$ of $G$-algebras. This bifunctor has remarkable properties.  Amongst them is the following deep theorem of Kasparov.
\begin{theorem}[\cite{Kas, Kas2}] For any $C^{*}$-$G$-algebras $A,B,C$ there exists an associative bilinear pairing
\[KK_{i}^{G}(A,B)\otimes_{\Z} KK_{j}^{G}(B,C)\xrightarrow{\otimes_{B}} KK_{i+j}^{G}(A,C).\] 
\end{theorem}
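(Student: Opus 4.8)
The plan is to construct the pairing concretely on the level of cycles and then check that it descends to homotopy classes. Fix Kasparov cycles $(\mathpzc{E}_{1},F_{1})$ representing a class in $KK_{i}^{G}(A,B)$ and $(\mathpzc{E}_{2},F_{2})$ representing a class in $KK_{j}^{G}(B,C)$. The candidate underlying module for the product is the internal tensor product $\mathpzc{E}:=\mathpzc{E}_{1}\otimes_{B}\mathpzc{E}_{2}$, formed by balancing the $\mathpzc{E}_{2}$-inner product against the left $B$-action; this is a $C^{*}$-$C$-module $\mathpzc{E}\lrh C$ carrying the diagonal $G$-action, and the representation $a\mapsto a\otimes 1$ makes it an equivariant $(A,C)$-bimodule with the tensor-product $\Z/2$-grading. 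The left $A$-action is thus immediate; everything nontrivial is concentrated in the choice of operator.

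The heart of the matter is producing the operator $F$. The naive candidates both fail: $F_{1}\otimes 1$ is adjointable but carries no information about $F_{2}$, while $1\otimes F_{2}$ is ill-defined since $F_{2}$ is only $B$-linear and the tensor product is balanced over $B$. Following Connes--Skandalis I would introduce the notion of an $F_{2}$-\emph{connection}: for homogeneous $\xi\in\mathpzc{E}_{1}$ let $T_{\xi}:\mathpzc{E}_{2}\rightarrow\mathpzc{E}$ be $\eta\mapsto\xi\otimes\eta$, and call $G\in\Endst_{C}(\mathpzc{E})$ a connection if $T_{\xi}F_{2}-(-1)^{\partial\xi}G\,T_{\xi}$ and its adjoint lie in $\K_{C}(\mathpzc{E}_{2},\mathpzc{E})$ for all such $\xi$. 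First I would show connections exist, by lifting $F_{2}$ through a quasi-central approximate unit and exploiting that $\K_{B}(\mathpzc{E}_{1})\otimes 1$ acts on $\mathpzc{E}$ by $C$-compact operators.

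With a connection $G$ in hand, the operator $F$ is manufactured by interpolating between $F_{1}\otimes 1$ and $G$. Here I would invoke the equivariant Kasparov Technical Theorem to produce a positive multiplier $M$, $0\le M\le 1$, that separates the compact defects of the two cycles: informally, $M$ absorbs the failure of $F_{1}\otimes 1$ to satisfy the cycle axioms while $1-M$ absorbs that of the connection, and $M$ commutes modulo $\K_{C}(\mathpzc{E})$ with a chosen $G$-invariant separable subalgebra $\Delta$ containing a dense part of $A$, the operator $F_{1}\otimes 1$, and $G$. The product operator is then $F:=M^{1/2}(F_{1}\otimes 1)+(1-M)^{1/2}G$, and the technical theorem is engineered precisely so that $(F^{2}-1)a$, $[F,a]$, $(F-F^{*})a$ and the non-invariance term $(gF-F)a$ all lie in $\K_{C}(\mathpzc{E})$ for $a\in A$, yielding a genuine equivariant Kasparov cycle.

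Finally I would establish that $[(\mathpzc{E},F)]$ is independent of all choices and of the representing cycles, and that the map so obtained is biadditive and associative. The cleanest route is the Connes--Skandalis characterisation: one shows the product class is the \emph{unique} $x\in KK_{i+j}^{G}(A,C)$ that is simultaneously an $F_{1}$-connection and satisfies a positivity condition on commutators, so any two admissible constructions agree up to homotopy. Biadditivity is then formal, and associativity follows by applying the uniqueness characterisation to the triple product built on $\mathpzc{E}_{1}\otimes_{B}\mathpzc{E}_{2}\otimes_{C}\mathpzc{E}_{3}$. I expect the main obstacle to be the equivariant technical theorem itself: arranging $M$ to be compatible \emph{both} with the compact-perturbation conditions \emph{and} with the continuous $G$-action requires an averaging over $G$ together with control of the continuity of $g\mapsto gM$, and it is this equivariance bookkeeping, rather than the algebraic cycle identities, that makes the statement deep.
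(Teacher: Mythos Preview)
The paper does not prove this theorem: it is stated as a background result and attributed to Kasparov's original papers \cite{Kas, Kas2}, with no argument given. So there is no proof in the paper to compare your proposal against.

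For what it is worth, your sketch follows the standard Connes--Skandalis picture of the Kasparov product (connections, the equivariant Technical Theorem, and the uniqueness characterisation to get well-definedness and associativity), which is the received way to unpack Kasparov's construction. The outline is coherent as a high-level plan, though of course each of the three ingredients you name --- existence of connections, the equivariant Technical Theorem, and the uniqueness/homotopy-invariance argument --- is itself a substantial piece of work, and the proposal would need considerable expansion before it could be called a proof. But since the paper simply invokes the result without proof, there is nothing further to compare.
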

In particular, the Kasparov product with an element in $KK_{i}^{G}(A,B)$ yields homomorphisms
\[K_{*}^{G}(A)\rightarrow K_{*+i}^{G}(B).\] 
In this paper we will be concerned with odd $KK$-theory. The cycles for the group $KK_{1}^{G}(A,B)$ are given in the following definition.
\begin{definition} Let $A,B$ be $G$-algebras. An \emph{odd equivariant unbounded bimodule} $(\mathpzc{E},D)$ for $(A,B)$ is given by an equivariant $C^{*}$-bimodule  $A\rightarrow\mathpzc{E}\leftrightharpoons B$ together with an unbounded regular operator $D$ in $\mathpzc{E}$ such that
\begin{itemize}\item $[D,a]\in\Endst_{B}(\mathpzc{E})$ for all $a$ in some dense subalgebra of $A$, \item $a\mathfrak{r}(D)\in\K_{B}(\mathpzc{E})$,\item The map $g\mapsto D- gDg^{-1}$ is a strictly continuous map $G\rightarrow \Endst_{B}(\mathpzc{E})$.\end{itemize}
\end{definition}
\section{Groupoids}
We now recall the theory of groupoids, their $C^{*}$-algebras and correspondences. Although we will only encounter the particularly simple type of correspondence given by a closed subgroupoid, we will need the general results on the bimodules and Morita equivalences they induce.
\subsection{Haar systems and the convolution algebra}
In general, topological groupoids can be viewed as generalizations of both groups and topological spaces. Both of these occur as extreme cases of the following definition.
\begin{definition} A \emph{groupoid} $\mathcal{G}$ is a small category in which
every morphism is invertible. The set of morphisms of $\mathcal{G}$ is denoted $\mathcal{G}^{(1)}$, and the objects $\mathcal{G}^{(0)}$.
We identify $\mathcal{G}^{(0)}$ with a subset of $\mathcal{G}^{(1)}$ as identity morphisms.
The groupoid $\mathcal{G}$ is said to be a \emph{locally compact Hausdorff} if $\mathcal{G}^{(1)}$ carries such a topology, and the domain and range maps
\[d,r:\mathcal{G}^{(1)}\rightarrow \mathcal{G}^{(0)}\subset \mathcal{G}^{(1)},\]
are continuous for this topology. It is said to be \emph{\'{e}tale} if $r$ and $d$ are local homeomorphisms.
\end{definition}
Thus, a group can be regarded as a groupoid with just one object, and a topological space as a groupoid with only identity morphisms. We will tacitly assume all groupoids to be locally compact and Hausdorff.

We will consider groupoids with the additional datum of a Haar system. This is a system of measures supported on the fibers of the range map $r$. Inversion in the groupoid yields a system of
measures supported on the fibers of $d$.
\begin{definition}[\cite{Re}] Let $\mathcal{G}$ be a locally compact Hausdorff groupoid. A \emph{Haar system} on $\mathcal{G}$ is a system of measures
$\{\nu^{x}:x\in\mathcal{G}^{(0)}\}$
on $\mathcal{G}^{(1)}$ such that
\begin{itemize}\item $\textnormal{supp }\nu^{x}=r^{-1}(x)$
\item $\forall f\in C_{c}(\mathcal{G}),\quad \int_{\mathcal{G}}f(\xi)d\nu^{r(\eta)}(\xi)=\int_{\mathcal{G}}f(\eta\xi)d\nu^{d(\eta)}(\xi)$
\item $\forall f\in C_{c}(\mathcal{G}),\quad g(x):=\int_{\mathcal{G}}f(\xi)d\nu^{x}(\xi)\in C(\mathcal{G}^{(0)}).$\end{itemize}\end{definition}
\'{E}tale groupoids always admit a Haar system, consisting of counting measures on the fibers.
There is a natural involution on $C_{c}(\mathcal{G})$ given by $f^{*}(\xi):=\overline{f(\xi^{-1})}$.
The Haar system also allows us to define the \emph{convolution product} in $C_{c}(\mathcal{G})$:
\[f*g(\eta):=\int_{\mathcal{G}}f(\xi)g(\xi^{-1}\eta)d\nu^{r(\eta)}.\]
This is an associative, distributive product that makes $C_{c}(\mathcal{G})$ into a topological *-algebra for the topology given by uniform convergence on compact subsets.
\begin{definition}[\cite{Hahn}]\label{full} Let $\mathcal{G}$ be a locally compact Hausdorff groupoid with Haar system. Define
\[\|f\|_{\nu}:=\sup_{u\in\mathcal{G}^{(0)}}\int_{\mathcal{G}}|f(\xi)|d\nu^{u},\quad\|f\|_{\nu^{-1}}:=\sup_{u\in\mathcal{G}^{(0)}}\int_{\mathcal{G}}|f(\xi^{-1})|d\nu^{u},\]
and
\[\|f\|_{I}:=\max\{\|f\|_{\nu},\|f\|_{\nu^{-1}}\}.\]
\end{definition}
Let $\mathcal{H}$ be a Hilbert space. A representation $\pi:C_{c}(\mathcal{G})\rightarrow B(\mathcal{H})$ is called \emph{admissible} if it is continuous with respect to the inductive limit topology on $C_{c}(\mathcal{G})$ and the weak operator topology on $B(\mathcal{H})$, and $\|\pi(f)\|\leq\|f\|_{I}$.
\begin{definition}[\cite{Re}]The \emph{full $C^{*}$-norm} on $C_{c}(\mathcal{G})$ is defined by
\[\|f\|:=\sup\{\|\pi(f)\|:\pi\textnormal{ admissible}\}.\]
The \emph{full $C^{*}$-algebra} $C^{*}(\mathcal{G})$ is the completion of $C_{c}(\mathcal{G})$ with respect to this norm. 
\end{definition}
The space $C_{c}(\mathcal{G})$ is a right module over $C_{c}(\mathcal{G}^{(0)})$ if we define
\[f*g(\xi):=f(\xi)g(d(\xi)),\quad f\in C_{c}(\mathcal{G}),\quad g\in C_{c}(\mathcal{G}^{(0)}).\]We can associate a canonical $C^{*}$-$C_{0}(\mathcal{G}^{(0)})$-module to a groupoid with Haar system  
via the pairing
\[\begin{split}C_{c}(\mathcal{G})\times
C_{c}(\mathcal{G}) &\rightarrow C_{c}(\mathcal{G}^{(0)})\\ \langle f,h\rangle(u)\quad&:=\int_{\mathcal{G}}\overline{f(\xi^{-1})}h(\xi^{-1})d\nu^{u}\xi.\end{split}\]
As usual, $C_{c}(\mathcal{G})$ gets a norm 
\[\|f\|^{2}:=\|\langle f,f\rangle\|:=\sup_{u\in\mathcal{G}^{(0)}} \int_{\mathcal{G}}|f(\xi^{-1})|^{2}d\nu^{u}\xi.\]
We denote the completion  of $C_{c}(\mathcal{G})$ in this norm by $L^{2}(\mathcal{G},\nu)$.
Since $C_{c}(\mathcal{G})$ acts on itself by convolution we get an embedding
\[C_{c}(\mathcal{G})\hookrightarrow \Endst_{C(\mathcal{G}^{(0)})}(L^2(\mathcal{G},\nu)).\] 
\begin{definition}[\cite{Re}]\label{red}The \emph{reduced $C^{*}$-algebra} 
$C^{*}_{r}(\mathcal{G})$ of $\mathcal{G}$, is the completion of $C_{c}(\mathcal{G})$ in the norm $\|.\|_{r}$ it gets as an algebra of operators on $L^{2}(\mathcal{G},\nu).$\end{definition}
This approach to defining $C^{*}_{r}(\mathcal{G})$ is different from that in \cite{Re} and was first considered in \cite{Skan}. As mentioned before, the $C^{*}$-algebras $C^{*}(\mathcal{G})$ and
$C^{*}_{r}(\mathcal{G})$ are not isomorphic in general. A sufficient condition for the algebras to coincide is that of \emph{amenability} \cite{An}.

\subsection{Groupoid actions}
 If $\phi_{i}:X_{i}\rightarrow Y$, $i=1,2$, are continuous maps between topological 
spaces $X_{i}$ and $Y$, we denote the pull back, or \emph{fibered product}, of the $X_{i}$ over $Y$ by
\[X_{1}*_{Y}X_{2}:=\{(x_{1},x_{2}):\phi_{1}(x_{1})=\phi_{2}(x_{2})\}.\]  The space $X_{1}*_{Y}X_{2}$ is the universal solution for commutative diagrams 
\begin{diagram} X &\rTo^{\psi_{1}} & X_{1}\\ \dTo^{\psi_{2}} & &\dTo^{\phi_{1}}\\X_{2} &\rTo^{\phi_{2}} & Y.\end{diagram} 
In case one of the $X_{i}$ is a groupoid $\mathcal{G}$ and a map $\rho:Z\rightarrow\mathcal{G}^{(0)}$ is given, it is convenient to write $\mathcal{G}\ltimes_{\rho} Z$ for the pull back with respect to $d$ and $\rho$, 
and $Z\rtimes_{\rho}\mathcal{G}$ for the pull back with respect to $r$ and $\rho$.
\begin{definition}[cf. \cite{Klaas1},\cite{Mcrun}] Let $Z$ be a topological space and $\mathcal{G}$ a groupoid. A \emph{left action} of $\mathcal{G}$ on $Z$ consists of a continuous map $\rho:Z\rightarrow \mathcal{G}^{(0)}$, called the \emph{moment map}, and a continuous map 
\[\begin{split}\mathcal{G}\ltimes_{\rho}Z & \rightarrow Z \\ (\xi,z) &\mapsto \xi z,\end{split}\] (the pull back is with respect to $d:\mathcal{G}\rightarrow \mathcal{G}^{(0)}$) with following properties:
\begin{itemize} \item $\rho (\xi z)=r(\xi),$ \item $\rho (z) z=z,$ \item If $(\xi_{1},\xi_{2})\in\mathcal{G}^{2}$ and $(\xi_{2},z)\in\mathcal{G}\ltimes_{\rho}Z  \rightarrow Z$ then $(\xi_{1}\xi_{2}) z=\xi_{1} (\xi_{2} z).$\end{itemize} The space $Z$ is said to be a \emph{left $\mathcal{G}$-bundle}.
\end{definition}
 
The notion of right action is obtained by switching $r$ and $d$ and considering  $Z\rtimes_{\rho}\mathcal{G}$. The spaces $Z\rtimes_{\rho}\mathcal{G}$ and $\mathcal{G}\ltimes_{\rho} Z$ are groupoids over 
$Z$. We will describe the structure for $Z\rtimes_{\rho}\mathcal{G}$. The structure for $\mathcal{G}\ltimes_{\rho} Z$ is similar. We have
\[Z\rtimes_{\rho}\mathcal{G}=\{(z,\xi)\in Z\times\mathcal{G}:\rho(z)=r(\xi)\},\]
and define
\[d(z,\xi):=z\xi,\quad r(z,\xi)=z,\quad (z,\xi)^{-1}=(z\xi,\xi^{-1}),\quad (z,\xi)(z\xi,\eta)=(z,\xi\eta).\]
This is well defined because $Z$ is a $\mathcal{G}$-bundle. If $Z$ carries both a left $\mathcal{G}$- and a right $\mathcal{H}$-action the actions are said to \emph{commute} if 
\begin{itemize}\item $\forall (\xi,z)\in\mathcal{G}\ltimes_{\rho}Z,(z,\chi)\in Z\rtimes_{\sigma}\mathcal{H},\quad(\xi z)\chi=\xi(z\chi),$ 
\item $\forall (z,\chi)\in Z\rtimes_{\sigma}\mathcal{H},\quad\rho(z\chi)=\rho(z),$
\item $\forall (\xi,z)\in \mathcal{G}\ltimes_{\rho}Z,\quad \sigma (\xi z)=\sigma(z)$. \end{itemize}  
Such a $Z$ is called a $\mathcal{G}$-$\mathcal{H}$-\emph{bibundle}. Moreover, the action is said to be \emph{left proper} if the map 
\[\begin{split} \mathcal{G}\ltimes_{\rho}Z &\rightarrow Z\times Z \\
(\xi,z) &\mapsto (\xi z, z),\end{split}\] is proper, that is, inverse images of compact sets are compact. Right properness is defined similarly. The notions of Morita equivalence and correspondence for groupoids are defined in terms of bibundles equipped with extra structure.

\begin{definition}[cf.\cite{Klaas1},\cite{Mcrun},\cite{SO}] Let $Z$ be a $\mathcal{G}$-$\mathcal{H}$ bibundle with moment maps $\rho:Z\rightarrow \mathcal{G}^{(0)}$ and $\sigma:Z\rightarrow \mathcal{H}^{(0)}$. The $\mathcal{G}$ action is said to be 
\emph{left principal} if the map
\[\begin{split} \mathcal{G}\ltimes_{\rho}Z &\rightarrow Z*_{\mathcal{H}^{(0)}}Z \\
(\xi,z) &\mapsto (\xi z, z),\end{split}\] is a homeomorphism. This is equivalent to saying that the $\mathcal{G}$-action is free, $\sigma$ is an open surjection and induces a bijection $\mathcal{G}\backslash
Z\rightarrow\mathcal{H}^{(0)}$. A bibundle which is both left principal and right proper is said to be a \emph{correspondence}, and is denoted
\[\mathcal{G}\rightarrow Z\lrh\mathcal{H}.\]
If the bibundle is both left- and right-principal, it is said to be an \emph{equivalence bibundle}. Two groupoids $\mathcal{G},\mathcal{H}$ are \emph{Morita equivalent} 
if there exists an equivalence $\mathcal{G}$-$\mathcal{H}$-bibundle.\end{definition}
Groupoid correspondences provide one with a well behaved notion of morphism for groupoids, first observed in \cite{HilSkan} and later developed in \cite{Klaas1,Klaas2} and \cite{SO}. In the definition of correspondence of \cite{SO}, the moment map $\rho$, like $\sigma$, is assumed to be surjective. This condition is absent in \cite{Klaas1}, and is not needed for the construction of the bimodules in the next section, as noted in \cite{Klaas1}.
 
\subsection{$C^{*}$-modules from correspondences}
Groupoid correspondences and equivalences give rise to correspondences and Morita equivalences for the full and reduced $C^{*}$-algebras. In the theorem below, it must be mentioned that the result concerning the Morita equivalence of the reduced $C^{*}$-algebras seems to be well known, and has been stated without proof many times in the literature. A proper proof of this result has been written down recently in \cite{SW}.
\begin{theorem}[\cite{Klaas1},\cite{MRW},\cite{SW},\cite{SO}]\label{bimod} Let $\mathcal{G}$ and $\mathcal{H}$ be groupoids with Haar system, and $\mathcal{G}\rightarrow Z\leftrightharpoons\mathcal{H}$ a groupoid correspondence. The space $C_{c}(Z)$ can be completed into $C^{*}$-correpondences 
\[C^{*}(\mathcal{G})\rightarrow\mathpzc{E}^{Z}\leftrightharpoons C^{*}(\mathcal{H})\quad\textnormal{and}\quad C^{*}_{r}(\mathcal{G})\rightarrow\mathpzc{E}^{Z}_{r}\leftrightharpoons C^{*}_{r}(\mathcal{H}).\] 
When the correspondence $\mathcal{G}\rightarrow Z\leftrightharpoons\mathcal{H}$ is an equivalence bibundle, the above $C^{*}$-correspondences are Morita equivalence bimodules.
\end{theorem}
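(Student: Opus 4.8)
The plan is to put all of the structure of a pre-correspondence on the dense subspace $C_{c}(Z)\subset\mathpzc{E}^{Z}$ and then complete. Writing $\nu$ for the Haar system on $\mathcal{G}$ and $\lambda$ for that on $\mathcal{H}$, I would first define a right $C_{c}(\mathcal{H})$-module action
\[
(f\cdot h)(z):=\int_{\mathcal{H}}f(z\chi)\,h(\chi^{-1})\,d\lambda^{\sigma(z)}(\chi),
\]
a left $C_{c}(\mathcal{G})$-action
\[
(a\cdot f)(z):=\int_{\mathcal{G}}a(\xi)\,f(\xi^{-1}z)\,d\nu^{\rho(z)}(\xi),
\]
and a $C_{c}(\mathcal{H})$-valued pairing
\[
\langle f_{1},f_{2}\rangle(\chi):=\int_{\mathcal{G}}\overline{f_{1}(\xi^{-1}z)}\,f_{2}(\xi^{-1}z\chi)\,d\nu^{\rho(z)}(\xi),
\]
where $z$ is any point with $\sigma(z)=r(\chi)$. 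Right properness of the $\mathcal{H}$-action guarantees that the first two integrals have compactly supported continuous output, while left principality is precisely what makes the third integral independent of the choice of $z$ in its $\mathcal{G}$-orbit (since $\sigma$ induces $\mathcal{G}\backslash Z\cong\mathcal{H}^{(0)}$) and valued in $C_{c}(\mathcal{H})$; recall also that left principality forces the $\mathcal{G}$-action to be proper, so these orbit integrals converge.

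Next I would verify, by manipulating the defining integrals and using the invariance identities of the Haar systems together with the bibundle axioms (the commuting of the two actions and the relations $\rho(\xi z)=r(\xi)$, $\sigma(\xi z)=\sigma(z)$), the purely algebraic relations: that the pairing is conjugate-symmetric and $C_{c}(\mathcal{H})$-linear in the second variable with $\langle f_{1},f_{2}\cdot h\rangle=\langle f_{1},f_{2}\rangle * h$; that the left action is by formal adjointables, $\langle a\cdot f_{1},f_{2}\rangle=\langle f_{1},a^{*}\cdot f_{2}\rangle$; and that both module actions are associative. The delicate point is positivity of $\langle f,f\rangle$ in $C^{*}(\mathcal{H})$. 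Here the strategy is to test against an arbitrary admissible representation of $C_{c}(\mathcal{H})$: realizing such a representation on a Hilbert space and rewriting $\langle f,f\rangle$ as a genuine operator inner product exhibits it as a positive operator, whence positivity in $C^{*}(\mathcal{H})$ by Definition \ref{full}. Boundedness of the actions and of the pairing for the full $C^{*}$-norm follows from the Hahn $I$-norm estimates of Definition \ref{full} and admissibility, which bound the convolution-type expressions uniformly.

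With the pre-correspondence structure in hand I would complete $C_{c}(Z)$ in the norm $\|f\|^{2}=\|\langle f,f\rangle\|$ to obtain a $C^{*}$-module $\mathpzc{E}^{Z}\lrh C^{*}(\mathcal{H})$. The left action, being by bounded formal adjointables and $I$-norm bounded, extends by the universal property of the full norm (Definition \ref{full}) to a $*$-homomorphism $C^{*}(\mathcal{G})\to\Endst_{C^{*}(\mathcal{H})}(\mathpzc{E}^{Z})$, producing the correspondence $C^{*}(\mathcal{G})\to\mathpzc{E}^{Z}\lrh C^{*}(\mathcal{H})$; essentiality of this representation, i.e. density of $C^{*}(\mathcal{G})\cdot\mathpzc{E}^{Z}$, is checked on an approximate identity drawn from $C_{c}(\mathcal{G})$. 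The reduced statement is obtained by running the same completion against the regular representations on the $L^{2}$-modules of Definition \ref{red} rather than against all admissible representations; this is the technically heavier route, and I would follow \cite{SW} for the measure-theoretic disintegration ensuring that the reduced norm behaves correctly under the construction.

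Finally, for an equivalence bibundle the extra hypothesis of right principality lets me repeat the construction with the roles of $\mathcal{G}$ and $\mathcal{H}$ interchanged, giving a $C_{c}(\mathcal{G})$-valued inner product
\[
{}_{C^{*}(\mathcal{G})}\langle f_{1},f_{2}\rangle(\xi):=\int_{\mathcal{H}}f_{1}(z\chi)\,\overline{f_{2}(\xi^{-1}z\chi)}\,d\lambda^{\sigma(z)}(\chi),
\]
with $\rho(z)=r(\xi)$. The identity to verify is the imprimitivity relation ${}_{C^{*}(\mathcal{G})}\langle f_{1},f_{2}\rangle\cdot f_{3}=f_{1}\cdot\langle f_{2},f_{3}\rangle$, which follows from Fubini and the two principality conditions. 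It realizes the image of $C_{c}(\mathcal{G})$ inside $\Fin_{C^{*}(\mathcal{H})}(\mathpzc{E}^{Z})\subset\K_{C^{*}(\mathcal{H})}(\mathpzc{E}^{Z})$, and fullness of both inner products (density of their ranges, which is where openness and surjectivity of $\rho$ and $\sigma$ enters) then upgrades the $*$-homomorphism $C^{*}(\mathcal{G})\to\K_{C^{*}(\mathcal{H})}(\mathpzc{E}^{Z})$ to an isomorphism, exhibiting $\mathpzc{E}^{Z}$ as a Morita equivalence bimodule. The main obstacle throughout is the positivity of the inner product and, in the equivalence case, proving that the left action exhausts the compacts; the reduced-algebra versions of both demand the careful disintegration arguments of \cite{SW}.
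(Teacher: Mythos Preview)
The paper does not prove this theorem: it is stated with attribution to \cite{Klaas1}, \cite{MRW}, \cite{SW}, \cite{SO}, and immediately afterwards the paper only records, ``for later reference and completeness,'' the explicit formulae (\ref{modH})--(\ref{innG}) for the module actions and inner products on the dense subspace $C_{c}(Z)$. There is thus no in-paper proof to compare against.

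Your sketch is the standard argument from the cited references, and your formulae agree on the nose with the paper's (\ref{modH}), (\ref{modG}), (\ref{innH}), and (up to the harmless left/right convention) (\ref{innG}). Your identification of the two genuine obstacles---positivity of the $C^{*}(\mathcal{H})$-valued pairing, and in the reduced case the disintegration arguments handled in \cite{SW}---matches what the paper itself flags when it remarks that the reduced Morita equivalence ``seems to be well known'' but was only properly written down in \cite{SW}. So your proposal is correct and is exactly the content the paper is citing rather than proving.
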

On the dense subspaces $C_{c}(\mathcal{G})$, $C_{c}(\mathcal{H})$ and $C_{c}(Z)$, explicit formulae for both the inner product(s) and module structures can be given. For later reference and completeness we give them here. 
For $\Phi\in C_{c}(Z)$, the right module action of $h\in C_{c}(\mathcal{H})$ is given by
\begin{equation}\label{modH}\Phi\cdot h (z):=\int_{\mathcal{H}}\Phi(z\chi)h(\chi^{-1})d\nu^{\sigma(z)}\chi.\end{equation}
Similarly, the left action of $g\in C_{c}(\mathcal{G})$ on $\Phi$ is
\begin{equation}\label{modG}g\cdot\Phi(z):=\int_{\mathcal{G}}g(\xi)\Phi(\xi^{-1}z)d\nu^{\rho(z)}\xi.\end{equation}
There is a $C_{c}(\mathcal{H})$-valued inner product on $C_{c}(Z)$:
\begin{equation}\label{innH}\langle \Phi,\Psi\rangle_{\mathcal{H}}(\chi):=\int_{\mathcal{G}}\overline{\Phi(\xi^{-1}z)}\Psi(\xi^{-1}z\chi)dv^{\rho(z)}\xi.\end{equation}
In this formula, $z\in Z$ is chosen such that $\sigma(z)=r(\chi)$, and it is independent of choice because $\mathcal{G}\setminus Z\cong\mathcal{H}^{(0)}$, and finite because the $\mathcal{G}$-action is proper. We have $\langle \Phi,\Psi\rangle_{\mathcal{H}}\in C_{c}(\mathcal{H})$ by virtue of the properness of the $\mathcal{H}$-action. In case the $\mathcal{H}$ action is transitive, one defines a $C_{c}(\mathcal{G})$-valued inner product by
\begin{equation}\label{innG}\langle \Phi,\Psi\rangle_{\mathcal{G}}(\eta):=\int_{\mathcal{H}}\Phi(\eta^{-1}z\chi)\overline{\Psi(z\chi)}d\nu^{\sigma(z)}\chi,\end{equation}
where $z\in Z$ is chosen in such a way that $\rho(z)=r(\eta)$. Again, the integral is independent of this choice by transitivity of the $\mathcal{H}$-action.
\section{Cocycles and $K$-theory}
The continuous cohomology of a groupoid generalizes that of a group. In this section we develop a connection between the cocycles defining the cohomology group $H^{1}(\mathcal{G},\R)$ and $K_{1}(C^{*}(\mathcal{G}))$. This is done by constructing
for each exact real-valued 1-cocycle $c:\mathcal{G}\rightarrow\R$ an odd unbounded $(C^{*}(\mathcal{G}),C^{*}(\mathcal{H}))$-bimodule, where $\mathcal{H}=\ker c$. This in turn induces maps
$K_{1}(C^{*}(\mathcal{G}))\rightarrow K_{0}(C^{*}(\mathcal{H}))$ and $K_{0}(C^{*}(\mathcal{G}))\rightarrow K_{1}(C^{*}(\mathcal{H}))$. According to properties of $c$, the $K$-groups of $C^{*}(\mathcal{H})$ can be more accessible than those of
$C^{*}(\mathcal{G})$, thus paving a way to the calculation of invariants of $C^{*}(\mathcal{G})$.
\subsection{Groupoid cocycles}
The cohomology of groupoids can be developed in complete generality, by adapting the theory for groups, in a similar way as the notion of action is adapted. A detailed decription of groupoid cohomology can be found in \cite{Re}. We will only be interested in continuous 1-cocycles satisfying some regularity property. 
\begin{definition} Denote by $Z^{1}(\mathcal{G},\R)$ the set of continuous homomorphisms $\mathcal{G}\rightarrow\R$. We will refer to the elements of $Z^{1}(\mathcal{G},\R)$ as \emph{cocycles} on $\mathcal{G}$. Denote by
$B^{1}(\mathcal{G},\R)$ the subset of those $c\in Z^{1}(\mathcal{G},\R)$ such that there exists a continuous function $f:\mathcal{G}^{(0)}\rightarrow\R$ such that
$c(\xi)=f(r(\xi))-f(d(\xi))$. The elements of $B^{1}(\mathcal{G},\R)$ are referred to as \emph{coboundaries}. 
\end{definition}
One defines $H^{1}(\mathcal{G},\R):=Z^{1}(\mathcal{G},\R)/B^{1}(\mathcal{G},\R)$, as usual, but we will not use this group in the present paper.
The kernel 
\[\ker c:=\{\xi\in\mathcal{G}:c(\xi)=0\}\]
of a continuous cocycle is a closed subgroupoid of $\mathcal{G}$, which we will denote by $\mathcal{H}$. It is immediate that $\mathcal{H}^{(0)}=\mathcal{G}^{(0)}$. $\mathcal{H}$ acts on $\mathcal{G}$ by both left- and right multiplication, and
these actions are proper. We will always consider the action by multiplication from the right. The resulting bibundle $\mathcal{G}\rightarrow\mathcal{G}\leftrightharpoons\mathcal{H}$ is a correspondence.

Any closed subgroupoid with Haar
system
$\mathcal{H}\subset\mathcal{G}$ is Morita equivalent to the crossed product $\mathcal{G}\ltimes_{r}\mathcal{G}/\mathcal{H},$ 
where the moment map $\mathcal{G}/\mathcal{H}\rightarrow\mathcal{G}^{(0)}$ for the action of $\mathcal{G}$ on $\mathcal{G}/\mathcal{H}$ is given by $[\chi]\mapsto r(\chi)$, whence the notation. 
The groupoid $\mathcal{G}\ltimes_{r}\mathcal{G}/\mathcal{H}$ inherits a Haar system from $\mathcal{G}$, 
since we have
\[r^{-1}([\eta])=\{(\xi,[\eta])\in\mathcal{G}\ltimes_{r}\mathcal{G}/\mathcal{H}:d(\xi)=r(\eta)\}\cong d^{-1}(r(\eta)).\] 
The equivalence correspondence is given by $\mathcal{G}$ itself with moment map \[\begin{split}\rho:\mathcal{G}&\rightarrow (\mathcal{G}\ltimes_{r}\mathcal{G}/\mathcal{H})^{(0)}=\mathcal{G}/\mathcal{H}\\\eta &\mapsto [\eta]\end{split}\]
equal to the quotient map. The left action is given by 
\begin{equation}\label{left}(\xi,[\eta_{1}])\eta_{2}=\xi\eta_{2},\end{equation} 
whenever $[\eta_{1}]=[\eta_{2}]$, and hence the bundle is left principal. The map $\sigma:\mathcal{G}\rightarrow\mathcal{H}^{(0)}$ is just equal to $d$. The bundle is right principal by construction. \newline

Recall that a map $\phi:X\rightarrow Y$ between topological spaces is a \emph{quotient map} if a subset $U\subset Y$ is open if and only if $\phi^{-1}(U)$ is open in $X$. That is, $Y$ carries the quotient topology defined by $\phi$.
\begin{definition}\label{exact}A cocycle $c:\mathcal{G}\rightarrow\R$ is \emph{regular} if $\mathcal{H}=\ker c$ admits a Haar system, and \emph{exact} if it is regular and the map 
\[\begin{split}r\times c:\mathcal{G}&\rightarrow\mathcal{G}^{(0)}\times \R \\ 
\xi &\mapsto (r(\xi),c(\xi))\end{split}\] 
is a quotient map onto its image.
\end{definition}
From the above discussion, it follows that for a regular cocycle,  the groupoid correspondence $\mathcal{G}\rightarrow\mathcal{G}\leftrightharpoons\mathcal{H}$ induces a correspondence $C^{*}(\mathcal{G})\rightarrow \mathpzc{E}^{\mathcal{G}}\lrh C^{*}(\mathcal{H})$, via theorem \ref{bimod}. For the reduced $C^{*}$-algebras, we get a correspondence $C^{*}_{r}(\mathcal{G})\rightarrow \mathpzc{E}^{\mathcal{G}}_{r}\lrh C^{*}_{r}(\mathcal{H})$ from the same theorem. Moreover, the full and reduced $C^{*}$-algebras of $\mathcal{G}$ and $\mathcal{G}\ltimes_{r}\mathcal{G}/\mathcal{H}$ are Morita equivalent. If $\mathcal{G}$ is an \'{e}tale groupoid, any closed subgroupoid admits a Haar system, as is the case when $\mathcal{G}$ is a Lie groupoid and $c$ is smooth. \newline
\begin{lemma}\label{quotient} Let $c:\mathcal{G}\rightarrow\R$ be an exact cocycle, and $\mathcal{H}=\ker c$. The map
\[\begin{split}\overline{r\times c}:\mathcal{G}/\mathcal{H}&\rightarrow\mathcal{G}^{(0)}\times\R\\
\xi &\mapsto (r(\xi),c(\xi)),\end{split}\]
is a homeomorphism onto its image. 
\end{lemma}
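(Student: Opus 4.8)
The plan is to exhibit $\overline{r\times c}$ as a continuous bijection onto its image $Y:=(r\times c)(\mathcal{G})$, and then to invoke the exactness hypothesis to upgrade this to a homeomorphism. The whole argument hinges on the commutative triangle
\[
\mathcal{G}\xrightarrow{\ q\ }\mathcal{G}/\mathcal{H}\xrightarrow{\ \overline{r\times c}\ }Y,\qquad r\times c=\overline{r\times c}\circ q,
\]
where $q$ is the quotient map onto the orbit space of the right $\mathcal{H}$-action, which by construction carries the quotient topology.

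First I would check that $r\times c$ is constant on $\mathcal{H}$-orbits and hence descends to $\overline{r\times c}$. If $\eta=\xi\chi$ with $\chi\in\mathcal{H}$, then $r(\eta)=r(\xi)$ and $c(\eta)=c(\xi)+c(\chi)=c(\xi)$, since $c$ is a homomorphism vanishing on $\mathcal{H}=\ker c$; thus $(r(\eta),c(\eta))=(r(\xi),c(\xi))$. Continuity of $\overline{r\times c}$ is then immediate from the universal property of the quotient topology on $\mathcal{G}/\mathcal{H}$, because $r\times c$ is continuous and $q$ is a quotient map.

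Next I would establish injectivity. Suppose $(r(\xi),c(\xi))=(r(\eta),c(\eta))$. Since $r(\xi)=r(\eta)$, the element $\chi:=\xi^{-1}\eta$ is composable and well defined, with $r(\chi)=d(\xi)$; moreover $c(\chi)=-c(\xi)+c(\eta)=0$, so $\chi\in\mathcal{H}$. As $\eta=\xi\chi$, we conclude $[\xi]=[\eta]$ in $\mathcal{G}/\mathcal{H}$, so $\overline{r\times c}$ is injective. At this stage $\overline{r\times c}$ is a continuous bijection onto $Y$.

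Finally, the homeomorphism property is where exactness enters, through the elementary principle that if $q$ is a surjective quotient map and $g\circ q$ is a quotient map onto $Y$, then the continuous bijection $g$ is open, hence a homeomorphism. Concretely, for open $U\subset\mathcal{G}/\mathcal{H}$ I would compute $(r\times c)^{-1}(\overline{r\times c}(U))=q^{-1}(U)$, using injectivity of $\overline{r\times c}$, and note this set is open; since $r\times c$ is a quotient map onto $Y$ by exactness, it follows that $\overline{r\times c}(U)$ is open in $Y$. Thus $\overline{r\times c}$ is an open continuous bijection onto $Y$, i.e. a homeomorphism onto its image. I expect the only point requiring care to be the bookkeeping in this last step, namely the preimage identity and its reliance on injectivity, rather than any genuine difficulty; the substance of the lemma is essentially a repackaging of the definition of exactness.
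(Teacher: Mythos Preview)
Your proof is correct and follows essentially the same route as the paper: both establish that $\overline{r\times c}$ is a continuous injection via $\xi^{-1}\eta\in\mathcal{H}$, and then invoke the factorization $r\times c=\overline{r\times c}\circ q$ together with the exactness hypothesis and the quotient topology on $\mathcal{G}/\mathcal{H}$ to conclude. The paper states the last step more tersely, while you spell out the openness argument explicitly, but the content is the same.
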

\begin{proof} First observe that  $\overline{r\times c}$ is a continuous injection: If $r(\xi)=r(\eta)$ and $c(\xi)=c(\eta)$, then $\xi^{-1}\eta\in\mathcal{H}$ and so $[\eta]=[\xi]$ in $\mathcal{G}/\mathcal{H}$.
Moreover, we have $(\overline{r\times c})\circ\rho=r\times c$, which is a quotient map by hypothesis. Since $\mathcal{G}/\mathcal{H}$ carries the quotient topology, the result follows.
\end{proof}
\begin{lemma}\label{openclosed} For a regular cocycle to be exact, it is sufficient that $r\times c$ be either open or closed.
\end{lemma}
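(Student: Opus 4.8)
The plan is to reduce the statement to the standard point-set fact that a continuous surjection which is either open or closed is automatically a quotient map. Write $q := r\times c$, regarded as a map onto its image $Y := q(\mathcal{G})\subseteq\mathcal{G}^{(0)}\times\R$ equipped with the subspace topology. Since regularity is already part of the hypothesis, establishing exactness amounts precisely to showing that $q:\mathcal{G}\to Y$ is a quotient map, i.e. that a set $U\subseteq Y$ is open whenever $q^{-1}(U)$ is open in $\mathcal{G}$ (the reverse implication being mere continuity of $q$).

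First I would observe that the hypothesis ``open or closed'' descends to the corestriction onto the image. Indeed, if $r\times c$ is open as a map into $\mathcal{G}^{(0)}\times\R$, then for any open $V\subseteq\mathcal{G}$ the set $q(V)$ is open in $\mathcal{G}^{(0)}\times\R$, and since $q(V)\subseteq Y$ we have $q(V)=q(V)\cap Y$ open in $Y$; thus $q:\mathcal{G}\to Y$ is open. The closed case is identical, using that $q(C)=q(C)\cap Y$ is closed in $Y$ whenever $q(C)$ is closed in the ambient product. This reduction is what lets us apply surjectivity of $q$ onto $Y$, which $r\times c$ does not enjoy as a map into the full product.

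Then I would run the two standard arguments. In the open case, suppose $q^{-1}(U)$ is open; since $q$ is surjective onto $Y$ we have $U=q(q^{-1}(U))$, and openness of $q$ makes this set open in $Y$. In the closed case, suppose $q^{-1}(U)$ is open, so its complement $q^{-1}(Y\setminus U)$ is closed; as $q$ is closed onto $Y$ and surjective, $q(q^{-1}(Y\setminus U))=Y\setminus U$ is closed, whence $U$ is open. In either situation $q$ is a quotient map onto its image, which is exactly the defining condition for exactness.

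I do not anticipate a genuine obstacle: the content is entirely the elementary topology lemma together with the remark that openness or closedness into the ambient product transfers to the corestriction onto the image. The only point requiring a little care is keeping the ``onto its image'' distinction straight, since $r\times c$ need not be surjective onto $\mathcal{G}^{(0)}\times\R$; this is precisely why I first corestrict to $Y$ before invoking surjectivity in the two arguments.
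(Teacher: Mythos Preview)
Your argument is correct and rests on the same elementary point-set fact as the paper's proof: a continuous surjection that is either open or closed is a quotient map. The paper's presentation differs only cosmetically. Rather than working directly with $q=r\times c$ corestricted to its image, the paper factors $r\times c=(\overline{r\times c})\circ\rho$ through the quotient map $\rho:\mathcal{G}\to\mathcal{G}/\mathcal{H}$, shows that closedness (resp.\ openness) of $r\times c$ forces $\overline{r\times c}$ to be closed (resp.\ open) via the identity $(\overline{r\times c})(C)=(r\times c)(\rho^{-1}(C))$, and then concludes that $\overline{r\times c}$ is a homeomorphism onto its image. Since $\rho$ is already a quotient map, this yields exactness. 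Your direct argument avoids the detour through $\mathcal{G}/\mathcal{H}$ and is slightly cleaner; the paper's version has the minor advantage of making explicit contact with Lemma~\ref{quotient}.
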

\begin{proof}Suppose $r\times c$ is closed. The proof in the open case translates verbatim. $C\subset\mathcal{G}/\mathcal{H}$ is closed if and only if $\rho^{-1}(C)$ is closed in $\mathcal{G}$. Thus
\[(\overline{r\times c})(C)=(\overline{r\times c})\rho\circ\rho^{-1}(C)=(r\times c) \rho^{-1}(C),\]
is closed. Thus $\overline{r\times c}$ is a continuous closed bijection onto its image, and therefore a homeomorphism.
\end{proof}
Renault \cite{Re} showed that a 1-cocycle $c\in Z^{1}(\mathcal{G},\R)$ defines a one-parameter group of automorphisms of $C^{*}(\mathcal{G})$ by
\begin{equation}\label{1par} u_{t}f(\xi)=e^{itc(\xi)}f(\xi).\end{equation}
Furthermore he showed that if $c\in B^{1}(\mathcal{G},\R)$, the automorphism group is inner, i.e. implemented by a strongly continuous family of unitaries in the multiplier algebra of $C^{*}(\mathcal{G})$.  In general, the one-parameter groups of $C^{*}(\mathcal{G})$ and $C^{*}_{r}(\mathcal{G})$ defined by a regular cocycle $c$ can be described conveniently in the bimodules $\mathpzc{E}^{\mathcal{G}}$ and $\mathpzc{E}^{\mathcal{G}}_{r}$.
\begin{proposition}\label{one}Let $c:\mathcal{G}\rightarrow\R$ be a regular cocycle. The operators 
\[\begin{split}U_{t}:C_{c}(\mathcal{G}) &\rightarrow C_{c}(\mathcal{G}) \\
U_{t}f(\xi)&=e^{itc(\xi)}f(\xi)\end{split}\]
extend to a one parameter group of unitaries in $\Endst_{C^{*}(\mathcal{H})}(\mathpzc{E}^{\mathcal{G}})$, resp. $\Endst_{C^{*}_{r}(\mathcal{H})}(\mathpzc{E}^{\mathcal{G}}_{r})$, implementing the one parameter group of 
automorphisms $u_{t}$ of $C^{*}(\mathcal{G})$, resp. $C^{*}_{r}(\mathcal{G}).$\end{proposition}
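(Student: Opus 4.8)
The plan is to verify all claims on the dense subspace $C_{c}(\mathcal{G})\subset\mathpzc{E}^{\mathcal{G}}$ and then extend by continuity; the reduced case will follow from the identical computations. The decisive point is that the inner product \eqref{innH} is $C^{*}(\mathcal{H})$-valued while $c$ vanishes identically on $\mathcal{H}=\ker c$, and this is precisely what makes $U_{t}$ isometric.

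First I would record that $U_{t}$ preserves $C_{c}(\mathcal{G})$ (multiplication by the continuous function $e^{itc}$ preserves continuity and compact support) and is a right $C_{c}(\mathcal{H})$-module map: using \eqref{modH} together with $c(z\chi)=c(z)+c(\chi)$ and $c(\chi)=0$ for $\chi\in\mathcal{H}$, the factor $e^{itc(z\chi)}$ reduces to $e^{itc(z)}$, giving $U_{t}(\Phi\cdot h)=(U_{t}\Phi)\cdot h$. Next, and this is the heart of the matter, I would substitute $U_{t}\Phi,U_{t}\Psi$ into \eqref{innH}. Since $c$ is real-valued the two exponential factors combine to $e^{it(c(\xi^{-1}z\chi)-c(\xi^{-1}z))}$, and the homomorphism property collapses the exponent to $itc(\chi)$, independent of the integration variable and of $z$. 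As the inner product is evaluated at $\chi\in\mathcal{H}$, where $c(\chi)=0$, this factor is $1$ and hence $\langle U_{t}\Phi,U_{t}\Psi\rangle_{\mathcal{H}}=\langle\Phi,\Psi\rangle_{\mathcal{H}}$. Combined with the evident relations $U_{s}U_{t}=U_{s+t}$ and $U_{0}=\id$ coming from $e^{isc}e^{itc}=e^{i(s+t)c}$, this shows each $U_{t}$ is an isometric bijection of the dense submodule onto itself with inverse $U_{-t}$; it therefore extends to a unitary of $\mathpzc{E}^{\mathcal{G}}$ with $U_{t}^{*}=U_{-t}$, and the $U_{t}$ assemble into a one-parameter group.

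To see that $U_{t}$ implements $u_{t}$ I would compute, using the left action \eqref{modG}, that $U_{t}(g\cdot\Phi)=u_{t}(g)\cdot(U_{t}\Phi)$. Here the full cocycle identity enters through $c(\xi)+c(\xi^{-1}z)=c(z)$: the factor $e^{itc(\xi)}$ from $u_{t}(g)$ and the factor $e^{itc(\xi^{-1}z)}$ from $U_{t}\Phi$ multiply to $e^{itc(z)}$, which is exactly the scalar produced by applying $U_{t}$ to $g\cdot\Phi$. Thus conjugation by $U_{t}$ carries the left action of $g$ to that of $u_{t}(g)$, where $u_{t}$ is the automorphism \eqref{1par}.

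All of these are algebraic identities among elements of $C_{c}(\mathcal{G})$ and $C_{c}(\mathcal{H})$, so they transfer verbatim to the reduced setting, the only change being the replacement of the completions by $\mathpzc{E}^{\mathcal{G}}_{r}$ over $C^{*}_{r}(\mathcal{H})$. I do not expect any serious obstacle: the one subtlety worth isolating is the cancellation of the twisting factor on the inner product, which hinges entirely on $c|_{\mathcal{H}}=0$, whereas the left-action computation is where the homomorphism property of $c$ is genuinely needed.
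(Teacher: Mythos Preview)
Your proof is correct and follows essentially the same approach as the paper: both verify the inner-product identity $\langle U_{t}\Phi,U_{t}\Psi\rangle_{\mathcal{H}}=\langle\Phi,\Psi\rangle_{\mathcal{H}}$ via $c|_{\mathcal{H}}=0$, and both establish the implementation property by the cocycle identity $c(\xi)+c(\xi^{-1}\eta)=c(\eta)$ (the paper writes this as $U_{t}(f*U_{t}^{*}g)=(u_{t}f)*g$, which is just a rearrangement of your $U_{t}(g\cdot\Phi)=u_{t}(g)\cdot(U_{t}\Phi)$). You supply somewhat more detail than the paper does, but there is no substantive difference in method.
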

\begin{proof} The identity $\langle U_{t}f,U_{t}g\rangle_{\mathcal{H}}=\langle f,g\rangle_{\mathcal{H}}$ is proved by a straightforward computation. Since $u_{t}f=f$ for $f\in C_{c}(\mathcal{H})$, $\mathpzc{E}^{\mathcal{G}}$ is an $\R$-module. To see that $U_{t}$ implements $u_{t}$, just compute:
\[\begin{split}U_{t}(f*U_{t}^{*}g)(\eta)&=e^{itc(\eta)}\int_{\mathcal{G}}f(\xi)e^{-itc(\xi^{-1}\eta)}g(\xi^{-1}\eta)d\nu^{r(\eta)}\\
&=\int_{\mathcal{G}}f(\xi)e^{itc(\xi)}g(\xi^{-1}\eta)d\nu^{r(\eta)}\\
&=(u_{t}f)*g(\eta).\end{split}\]\end{proof}

\subsection{An equivariant odd bimodule}
The generator of the one parameter group described in proposition \ref{one} is closely related to the cocycle $c$. On the level of $C_{c}(\mathcal{G})$, pointwise multiplication by $c$ induces a derivation  \cite{Re}, which we will
further investigate in this section.
\begin{proposition}\label{der} Let $\mathcal{G}$ be a locally compact Hausdorff groupoid with Haar system, $c:\mathcal{G}\rightarrow \R$ a regular cocyle, and $\mathcal{H}=\ker
c$.  The operator
\[\begin{split}D:C_{c}(\mathcal{G})&\rightarrow C_{c}(\mathcal{G})\\
f(\xi)&\mapsto c(\xi)f(\xi),\end{split}\]
is a $C_{c}(\mathcal{H})$-linear derivation of $C_{c}(\mathcal{G})$ considered as a bimodule over itself. 
Moreover, it extends to a selfadjoint regular operator in the $C^{*}$-modules $\mathpzc{E}^{\mathcal{G}}\leftrightharpoons C^{*}(\mathcal{H})$ and $\mathpzc{E}^{\mathcal{G}}_{r}\leftrightharpoons C^{*}_{r}(\mathcal{H}).$
\end{proposition}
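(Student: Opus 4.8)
The plan is to treat the two algebraic claims and the analytic claim separately. For the algebraic claims I would argue by direct computation, the only inputs being the homomorphism property $c(\xi\eta)=c(\xi)+c(\eta)$ and the vanishing of $c$ on $\mathcal{H}=\ker c$. Writing out the convolution $(f*g)(\eta)=\int_{\mathcal{G}}f(\xi)g(\xi^{-1}\eta)\,d\nu^{r(\eta)}\xi$ and inserting $c(\eta)=c(\xi)+c(\xi^{-1}\eta)$ into the integrand splits $D(f*g)$ into exactly $(Df)*g+f*(Dg)$, so $D$ is a derivation. Similarly, using the right-module formula \eqref{modH} with $Z=\mathcal{G}$ and $\sigma=d$, for $h\in C_{c}(\mathcal{H})$ one has $c(\xi\chi)=c(\xi)$ whenever $\chi\in\mathcal{H}$, so the scalar $c(\xi)$ factors out of the integral defining $f\cdot h$, giving $D(f\cdot h)=(Df)\cdot h$. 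Both identities are immediate.

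The substance is the selfadjoint regular extension, and here I would exploit that $D$ is the infinitesimal generator of the one-parameter unitary group $U_{t}$ of Proposition \ref{one}, since $U_{t}f(\xi)=e^{itc(\xi)}f(\xi)$ differentiates formally to $iDf$. First I would check that $U_{t}$ is strongly continuous on $\mathpzc{E}^{\mathcal{G}}$: since $\|U_{t}\|=1$ and $C_{c}(\mathcal{G})$ is dense, it suffices to treat $f\in C_{c}(\mathcal{G})$, where $U_{t}f-f=(e^{itc}-1)f$ is supported in the fixed compact set $\supp f$ and tends to $0$ uniformly (as $c$ is bounded there); because the module norm is dominated by the inductive-limit topology on functions with fixed compact support, $\|U_{t}f-f\|\to 0$ follows.

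With strong continuity in hand I would invoke the $C^{*}$-module analogue of Stone's theorem (from the theory of regular operators in \cite{Lan,Wor}) to produce a selfadjoint regular operator $\tilde{D}$ with $U_{t}=e^{it\tilde{D}}$. One then identifies $\tilde{D}$ with the closure of $D$: for $f\in C_{c}(\mathcal{G})$ the difference quotient $t^{-1}(U_{t}f-f)-iDf$ has integrand $\big(t^{-1}(e^{itc}-1)-ic\big)f$, bounded pointwise by $\tfrac{|t|}{2}\,c(\xi)^{2}|f(\xi)|$ on $\supp f$, hence converging to $0$ in module norm; thus $C_{c}(\mathcal{G})\subset\Dom\tilde{D}$ with $\tilde{D}f=cf=Df$. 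Finally $C_{c}(\mathcal{G})$ is invariant under $U_{t}$ (as $e^{itc}f\in C_{c}(\mathcal{G})$) and dense, so it is a core for $\tilde{D}$; hence $\tilde{D}$ is the closure of $D$ and is selfadjoint and regular. The verbatim argument with $U_{t}\in\Endst_{C^{*}_{r}(\mathcal{H})}(\mathpzc{E}^{\mathcal{G}}_{r})$ disposes of the reduced case.

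The main obstacle is precisely this analytic step: symmetry of $D$ by itself does not yield regularity, since $C^{*}$-modules admit no spectral theorem, so selfadjointness and regularity must be obtained together from the unitary group. The two delicate points are the domination of the module norm by the inductive-limit topology (used both for strong continuity and for computing the generator on $C_{c}(\mathcal{G})$) and the core property, which rests on the $U_{t}$-invariance of $C_{c}(\mathcal{G})$. An alternative route avoiding Stone's theorem would establish symmetry directly from the inner-product formula \eqref{innH} using that $c$ is real-valued, and then verify via Corollary \ref{symself} that $D\pm i$ have dense range by realizing the resolvents as Laplace transforms $\int_{0}^{\infty}e^{-s}U_{\pm s}\,ds$ of the group; this is workable but relies on the same group and is less clean.
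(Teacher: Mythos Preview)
Your proof is correct, but the analytic portion takes a genuinely different route from the paper. The paper does not invoke Stone's theorem or the unitary group at all for this proposition. Instead it argues directly from the explicit pointwise formulas: after checking symmetry via \eqref{innH}, it observes that on $C_{c}(\mathcal{G})$ one has $(1+D^{*}D)f(\xi)=(1+c(\xi)^{2})f(\xi)$ and $(D\pm i)f(\xi)=(c(\xi)\pm i)f(\xi)$; since $(1+c^{2})^{-1}$ and $(c\pm i)^{-1}$ are bounded continuous functions, multiplication by them carries $C_{c}(\mathcal{G})$ to itself, so each of these operators already has range containing $C_{c}(\mathcal{G})$. Dense range of $1+D^{*}D$ gives regularity, and dense range of $D\pm i$ gives selfadjointness via Corollary~\ref{symself}.

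Your approach via the generator of $U_{t}$ is sound and conceptually natural, since $D$ \emph{is} that generator; it also packages regularity and selfadjointness together. The cost is that you must import the $C^{*}$-module Stone theorem and a core criterion for generators, whereas the paper's argument is entirely self-contained within the elementary facts about regular operators already recalled in Section~1.2. Interestingly, your ``alternative route'' in the last paragraph is much closer to what the paper actually does, except that the paper needs no Laplace transform: the resolvents $(D\pm i)^{-1}$ are simply multiplication by $(c\pm i)^{-1}$ on $C_{c}(\mathcal{G})$, so density of the range is immediate without integrating the group.
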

\begin{proof} It is clear that $D$ is $C_{c}(\mathcal{H})$-linear and the following computation
\[\begin{split}f*Dg(\eta)&=\int_{\mathcal{G}}f(\xi)Dg(\xi^{-1}\eta)d\nu^{r(\eta)}\\
&=\int_{\mathcal{G}}f(\xi)c(\xi^{-1}\eta)g(\xi^{-1}\eta)d\nu^{r(\eta)}\\
&=c(\eta)\int_{\mathcal{G}}f(\xi)g(\xi^{-1}\eta)d\nu^{r(\eta)}-\int_{\mathcal{G}}c(\xi)f(\xi)g(\xi^{-1}\eta)d\nu^{r(\eta)}\\
&=\mathcal{D}(f*g)(\eta)-(Df)*g(\eta),\end{split}\]
shows it is a derivation. Furthermore, it is straightforward to check that \[\langle Df,g\rangle_{\mathcal{H}}=\langle f,Dg\rangle_{\mathcal{H}},\] using formula \ref{innH}. Thus, $D$ is closable, and we will denote its
closure by $D$ as well. It is regular because on $C_{c}(\mathcal{G})$ we have
\[(1+D^{*}D)f(\xi)=(1+c^{2}(\xi))f(\xi),\]
and this clearly has dense range. The same goes for $D+i$ and $D-i$, restricted to $C_{c}(\mathcal{G})$. Therefore, by lemma \ref{closedrange}, these operators are bijective, and hence the Cayley transform $\mathfrak{c}(D)$
(\ref{Cayley})
is unitary. Then, by corollary \ref{symself}, it follows that $D$ is selfadjoint.\end{proof}
The operator $D$ is of course the generator of the one-parameter group of proposition \ref{one}. \newline

From lemma \ref{quotient} we have the identification 
\begin{equation}\label{ident}\mathcal{G}/\mathcal{H}\xrightarrow{\sim}\{(r(\xi),c(\xi)):\xi\in\mathcal{G}\},\end{equation} and for convenience of notation we identify $\mathcal{G}/\mathcal{H}$ with its image in $\mathcal{G}^{(0)}\times \R$. Using this identification, we see that if $K\subset\mathcal{G}^{(0)}$ is compact, the induced map $c:(K\times\R)\cap\mathcal{G}/\mathcal{H}\rightarrow\R$ is proper. 
It is a key fact in the subsequent proof.
\begin{theorem}\label{KK}Let $\mathcal{G}$ be a locally compact Hausdorff groupoid and $c:\mathcal{G}\rightarrow \R$ an exact cocycle. The operator $D$ from proposition \ref{der}, makes the correspondences
\[C^{*}(\mathcal{G})\rightarrow\mathpzc{E}^{\mathcal{G}}\leftrightharpoons C^{*}(\mathcal{H}),\quad C^{*}_{r}(\mathcal{G})\rightarrow\mathpzc{E}^{\mathcal{G}}_{r}\leftrightharpoons C^{*}_{r}(\mathcal{H}),\]
into odd $\R$-equivariant unbounded bimodules.
\end{theorem}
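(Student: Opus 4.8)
The plan is to check, one by one, the clauses in the definition of an odd $\R$-equivariant unbounded bimodule for $(C^{*}(\mathcal{G}),C^{*}(\mathcal{H}))$, noting that most of them are already available. The underlying correspondence $C^{*}(\mathcal{G})\rightarrow\mathpzc{E}^{\mathcal{G}}\lrh C^{*}(\mathcal{H})$ is produced by Theorem \ref{bimod}, and the statement that $D$ is a selfadjoint regular operator on it is exactly Proposition \ref{der}. For the equivariance of the bimodule I would take the $\R$-action on $\mathpzc{E}^{\mathcal{G}}$ to be the one-parameter unitary group $U_{t}$ of Proposition \ref{one}: strict continuity of $t\mapsto U_{t}$ follows by dominated convergence on the dense subspace $C_{c}(\mathcal{G})$; the identity $\langle U_{t}f,U_{t}g\rangle_{\mathcal{H}}=\langle f,g\rangle_{\mathcal{H}}$ from Proposition \ref{one}, together with the triviality of the $\R$-action on $C^{*}(\mathcal{H})$ (because $c$ vanishes on $\mathcal{H}=\ker c$), gives the inner-product and module-compatibility axioms; and the computation $U_{t}(f\ast e)=(u_{t}f)\ast(U_{t}e)$ of Proposition \ref{one} gives the compatibility of the left $C^{*}(\mathcal{G})$-action with the group action. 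The bounded-commutator condition is then immediate from Proposition \ref{der}: for $f\in C_{c}(\mathcal{G})$ the derivation identity yields $[D,f]=L_{cf}$, i.e.\ left convolution by $cf\in C_{c}(\mathcal{G})$, which is adjointable since it lies in the image of $C^{*}(\mathcal{G})\rightarrow\Endst_{C^{*}(\mathcal{H})}(\mathpzc{E}^{\mathcal{G}})$.

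A pleasant simplification handles the last, equivariance-continuity clause. Since $D$ and $U_{t}$ both act on $C_{c}(\mathcal{G})$ as pointwise multiplication---by $c(\xi)$ and by $e^{itc(\xi)}$ respectively---they commute, so $U_{t}DU_{t}^{*}=D$ and the map $t\mapsto D-U_{t}DU_{t}^{-1}$ is identically zero, hence trivially strictly continuous.

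This leaves the one substantial point, and the only place exactness of $c$ is used: the local-compactness condition $f\,\mathfrak{r}(D)\in\K_{C^{*}(\mathcal{H})}(\mathpzc{E}^{\mathcal{G}})$ for $f\in C_{c}(\mathcal{G})$. I would first reduce by functional calculus. Writing $\mathfrak{r}(D)=g(D)$ with $g(x)=(1+x^{2})^{-\frac{1}{2}}\in C_{0}(\R)$, and using that $\K_{C^{*}(\mathcal{H})}(\mathpzc{E}^{\mathcal{G}})$ is norm closed while $\phi\mapsto\phi(D)$ is norm-contractive, it suffices to prove $f\,\phi(D)\in\K_{C^{*}(\mathcal{H})}(\mathpzc{E}^{\mathcal{G}})$ for $\phi$ in the dense subset $C_{c}(\R)\subset C_{0}(\R)$. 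On $C_{c}(\mathcal{G})$ the operator $\phi(D)$ is pointwise multiplication by $\phi\circ c$, so $f\,\phi(D)$ acts by $h\mapsto f\ast((\phi\circ c)h)$, and a change of variable shows its integral kernel is nonzero only when the range and source variables satisfy $r\in r(\supp f)$ and $c\in\supp\phi$ modulo the compact set $c(\supp f)$. Invoking the key fact recorded before the theorem---that $c$ is proper on $(K\times\R)\cap\mathcal{G}/\mathcal{H}$ for compact $K\subset\mathcal{G}^{(0)}$---this kernel is supported on a compact subset of the groupoid $\mathcal{G}\ltimes_{r}\mathcal{G}/\mathcal{H}$. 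Under the Morita equivalence $\K_{C^{*}(\mathcal{H})}(\mathpzc{E}^{\mathcal{G}})\cong C^{*}(\mathcal{G}\ltimes_{r}\mathcal{G}/\mathcal{H})$ of Theorem \ref{bimod}, $f\,\phi(D)$ therefore corresponds to a compactly supported function, an element of the dense convolution subalgebra, hence compact; equivalently one approximates it in operator norm by finite sums $\Phi_{i}\langle\Psi_{i},\,\cdot\,\rangle_{\mathcal{H}}$ using a partition of unity subordinate to a finite cover of the compact support.

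The main obstacle is precisely this finite-rank approximation: one must control the $C^{*}(\mathcal{H})$-valued operator norm of the error rather than merely the pointwise behaviour of kernels, and it is the compactness of the support region---a direct consequence of exactness through the properness statement---that makes the approximation uniform and forces $f\,\phi(D)$ into the compacts. Without exactness this region could fail to be compact and the argument collapses. The reduced case $C^{*}_{r}(\mathcal{G})\rightarrow\mathpzc{E}^{\mathcal{G}}_{r}\lrh C^{*}_{r}(\mathcal{H})$ is settled by the identical argument with full norms replaced by reduced ones throughout.
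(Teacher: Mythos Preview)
Your proposal is correct and covers all the required axioms; the only substantive step, compactness of $f\,\mathfrak{r}(D)$, is handled by a genuinely different route from the paper's.

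The paper works directly with $(1+D^{2})^{-1}$: it identifies $f(1+D^{2})^{-1}$ with the kernel $k_{f}(\xi,[\eta])=(1+c^{2}(\eta))^{-1}f(\xi)$ on $\mathcal{G}\ltimes_{r}\mathcal{G}/\mathcal{H}$, then truncates by cutoff functions $e_{n}$ supported on the compact sets $K_{n}=(r(\supp f)\times\R)\cap c^{-1}([-n,n])$ and proves the resulting sequence $k_{f}^{n}$ is Cauchy in the $I$-norm via the explicit estimate $\|k_{f}^{n}-k_{f}^{m}\|_{I}\leq (1+m^{2})^{-1}\|f\|_{I}$. Your approach instead pushes the approximation into the functional calculus: by density of $C_{c}(\R)$ in $C_{0}(\R)$ and norm-continuity of $\phi\mapsto\phi(D)$, it suffices to treat $\phi\in C_{c}(\R)$, in which case the kernel $f(\xi)\phi(c(\eta))$ already lies in $C_{c}(\mathcal{G}\ltimes_{r}\mathcal{G}/\mathcal{H})$ and no Cauchy estimate is needed. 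Both arguments hinge on the same exactness consequence (properness of $c$ on $(K\times\R)\cap\mathcal{G}/\mathcal{H}$). Your route is cleaner conceptually and bypasses the $I$-norm computation entirely; the paper's route is more hands-on and makes the rate of decay $(1+c^{2})^{-1}$ visibly responsible for compactness. One small clarification: your description of the kernel support (``$r\in r(\supp f)$ and $c\in\supp\phi$ modulo $c(\supp f)$'') is slightly tangled---it is cleaner to read the kernel directly as $(\xi,[\eta])\mapsto f(\xi)\phi(c(\eta))$ on $\mathcal{G}\ltimes_{r}\mathcal{G}/\mathcal{H}$, whose support sits in $\supp f\times\{[\eta]:r(\eta)\in d(\supp f),\ c(\eta)\in\supp\phi\}$, and then invoke properness once.
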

\begin{proof}  The derivation property implies that the commutators $[D,f]$ are bounded for $f\in C_{c}(\mathcal{G})$. They are given by convolution by $Df.$ So it remains to show that $D$ has $C^{*}(\mathcal{H})$-compact 
resolvent. To this end, let $f,\Phi\in C_{c}(\mathcal{G})$. The operator $f\circ (1+D^{2})^{-1}$ acts as
\[f\circ (1+D^{2})^{-1}\Phi(\eta)=\int_{\mathcal{G}}f(\xi)(1+c^{2}(\xi^{-1}\eta))^{-1}\Phi(\xi^{-1}\eta)d\nu^{r(\eta)}\xi.\]
From \ref{modG} and \ref{left}, we see that the action of \[g\in C_{c}(\mathcal{G}\ltimes_{r}\mathcal{G}/\mathcal{H})\subset C^{*}(\mathcal{G}\ltimes_{r}\mathcal{G}/\mathcal{H})=\K_{C^{*}(\mathcal{H})}(\mathpzc{E}^{\mathcal{G}}),\] 
is given by
\begin{eqnarray}\nonumber g\Psi(\eta)&=&\int_{\mathcal{G}\ltimes_{r}\mathcal{G}/\mathcal{H}}g(\xi_{1},[\xi_{2}])\Psi(\xi^{-1}_{1}\eta)d\nu^{[\eta]}(\xi_{1},[\xi_{2}])\\ &=&
\int_\mathcal{G} g(\xi,[\xi^{-1}\eta])\Psi(\xi^{-1}\eta)d\nu^{r(\eta)}\xi. \label{action}\end{eqnarray}
Thus, if we show that for each $f\in C_{c}(\mathcal{G})$ the function
\[k_{f}(\xi,[\eta]):=(1+c^{2}(\eta))^{-1}f(\xi)\]
is a norm limit of elements in $C_{c}(\mathcal{G}\ltimes_{r}\mathcal{G}/\mathcal{H})$, then we are done. \newline\newline 
Define
\[K_{n}:=(r(\supp f)\times\R)\cap c^{-1}([-n,n])\subset\mathcal{G}/\mathcal{H},\]
such that $c(K_{n})\subset [-n,n].$ Here we identify $\mathcal{G}/\mathcal{H}$ with its image in $\mathcal{G}^{(0)}\times\R$ (cf.\ref{ident}), and we view $c$ as a map $\mathcal{G}/\mathcal{H}\rightarrow\R.$ Then
   \[ \dots\subset\dots\subset K_{n}\subset K_{n+1}\subset\dots\subset\mathcal{G}/\mathcal{H},\] is a filtration of $(r(\supp f)\times\R)\cap\mathcal{G}/\mathcal{H}$ by compact sets, cf. lemma \ref{quotient}. Moreover, we may assume that the image of $c$ is
   not a bounded set in $\R$, and that $K_{n}\neq K_{n+1}$ (if not, just rescale).
Thus, there exist cutoff functions \[e_{n}:\mathcal{G}/\mathcal{H}\rightarrow [0,1],\] with \[e_{n}=1 \quad\textnormal{on } K_{n}, \quad e_{n}=0\quad\textnormal{on } \mathcal{G}/\mathcal{H}\setminus K_{n+1}.\]
Define 
\[k^{n}_{f}(\xi,[\eta]):=e_{n}([\eta])k_{f}(\xi,[\eta]).\]

Recall from definitions \ref{full} and \ref{red} that $\|\cdot\|_{r}\leq\|\cdot\|\leq\|\cdot\|_{I}$, so it suffices to show that $\|k^{n}_{f}-k^{m}_{f}\|_{I}\rightarrow 0$ as $n>m\rightarrow\infty$.
For $n>m$ we can estimate:
\[\begin{split}\|k^{n}_{f}-k^{m}_{f}\|_{\nu}&=\sup_{[\eta]\in\mathcal{G}/\mathcal{H}}\int_{\mathcal{G}\ltimes_{r}\mathcal{G}/\mathcal{H}}|k^{n}_{f}(\xi,[\eta])-k^{m}_{f}(\xi,[\eta])|d\nu^{[\eta]}\\
&=\sup_{[\eta]\in\mathcal{G}/\mathcal{H}}\int_{\mathcal{G}}|k_{f}^{n}(\xi,[\eta])-k^{m}_{f}(\xi,[\eta])|d\nu^{r(\eta)}\\
&=\sup_{[\eta]\in\mathcal{G}/\mathcal{H}}\int_{\mathcal{G}}|(e_{n}-e_{m})(\eta)(1+c^{2}(\eta))^{-1}f(\xi)|d\nu^{r(\eta)}\\
&\leq\frac{1}{1+m^{2}}\sup_{[\eta]\in\mathcal{G}/\mathcal{H}}\int_{\mathcal{G}}|f(\xi)|d\nu^{r(\eta)}\\
&=\frac{1}{1+m^{2}}\|f\|_{\nu}.
\end{split}\]
For $\|k^{n}_{f}-k^{m}_{f}\|_{\nu^{-1}}$ a similar computation yields the estimate
\[\|k^{n}_{f}-k^{m}_{f}\|_{I}\leq \frac{1}{1+m^{2}}\|f\|_{I},\] proving that the sequence $k_{f}^{n}$ is Cauchy for $\|\cdot\|_{I}$ and hence for $\|\cdot\|$ and $\|\cdot\|_{r}$. Furthermore, it converges to $f(1+D^{2})^{-1}.$ Since $D$ is the generator of the $\R$-action on $\mathpzc{E}^{\mathcal{G}}$, they commute, and thus the $KK$-cycle is equivariant.\end{proof}

A very simple application of  theorem \ref{KK} recovers the canonical spectral triple on the real line. Consider $\R$ as a groupoid, and take $c=\id:\R\rightarrow\R$. The kernel of $c$ is a point, so $C^{*}(\mathcal{H})=\C$. The spectral triple so obtained is the Fourier transform of the 
canonical Dirac triple $(C_{0}(\R),L^{2}(\R),i\frac{\partial}{\partial x})$ on the line. The canonical triple on the circle (the one point compactification of the line), is obtained directly from the embedding $\Z\rightarrow \R$.

\subsection{Continuous quasi-invariant measures}\label{measures}
An interesting class of cocyles $c:\mathcal{G}\rightarrow\R$ comes from certain well-behaved measures on the unit space $\mathcal{G}^{(0)}$. For this class of cocycles, the kernel algebra $C^{*}(\mathcal{H})$ carries a
canonical trace $\tau:C^{*}(\mathcal{H})\rightarrow\C$. Note that, for an arbitrary $\R$-algebra $A$, $KK_{*}^{\R}(\C,A)\cong K_{*}(A)$, in view of the Baum-Connes conjecture for $\R$, and Connes' Thom isomorphism \cite{Conthom}.  Composition of the induced homomorphism $\tau_{*}:K_{0}(C^{*}(\mathcal{H}))\rightarrow\C$ with the homomorphism $K_{1}(C^{*}(\mathcal{G}))\rightarrow
K_{0}(C^{*}(\mathcal{H}))$ induced by the bimodule coming from $c$, yields an index map $K_{1}(C^{*}(\mathcal{G}))\rightarrow\C$.
\begin{definition} Let $\mathcal{G}$ be a groupoid with Haar system $\{\nu^{x}\}$ and $\mu$ be a positive Radon measure on $\mathcal{G}^{(0)}$. $\nu^{\mu}$ denotes a measure on $\mathcal{G}$, the measure \emph{induced by}
$\mu$, and is defined by
\[\int_{\mathcal{G}} f (\xi)
d\nu^{\mu} (\xi):=\int_{\mathcal{G}^{(0)}}\int_{\mathcal{G}}f(\xi)
d\nu^{x}(\xi) d\mu(x).\]
The measure $\mu$ is said to be \emph{quasi-invariant} if $\nu^{\mu}$ is equivalent to its inverse $\nu_{\mu}$, induced by the corresponding right Haar system on $\mathcal{G}$. The function
\[\Delta:=\frac{d\nu^{\mu}}{d\nu_{\mu}}:\mathcal{G}\rightarrow \R_{+}^{\times},\]
is called the \emph{modular function} of $\mu$. If this function is continuous, then $\mu$ is said to be \emph{continuous}.
\end{definition}
The modular function is an almost everywhere homomorphism \cite{Re}. That is, it is a \emph{measurable} cocycle on $\mathcal{G}$. We will only be interested in continuous measures, and in that case Renault's result is
rephrased as follows.
\begin{proposition} Let $\mathcal{G}$ be a groupoid with Haar system and $\mu$ a continuous quasi-invariant measure on $\mathcal{G}^{(0)}$. Then the modular function $\Delta:\mathcal{G}\rightarrow \R_{+}^{\times}$ is a continuous
cocycle.
\end{proposition}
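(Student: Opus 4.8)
The plan is to upgrade the almost-everywhere cocycle identity for $\Delta$, supplied by Renault's theorem cited immediately above, to an identity holding at \emph{every} composable pair, by combining continuity with a full-support argument. Write $\mathcal{G}^{(2)}:=\{(\xi,\eta)\in\mathcal{G}\times\mathcal{G}:d(\xi)=r(\eta)\}$ for the set of composable pairs and consider
\[F(\xi,\eta):=\Delta(\xi\eta)-\Delta(\xi)\Delta(\eta)\]
on $\mathcal{G}^{(2)}$. Since $\Delta$ is continuous by hypothesis and multiplication $\mathcal{G}^{(2)}\to\mathcal{G}$ is continuous, $F$ is continuous, so the set $N:=\{(\xi,\eta)\in\mathcal{G}^{(2)}:F(\xi,\eta)\neq 0\}$ is open.

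First I would make precise the measure to which Renault's statement refers. Mimicking the construction of $\nu^{\mu}$ from the Haar system and $\mu$, define a measure $\tilde{\nu}$ on $\mathcal{G}^{(2)}$ by
\[\int_{\mathcal{G}^{(2)}}g\,d\tilde{\nu}:=\int_{\mathcal{G}^{(0)}}\int_{\mathcal{G}}\int_{\mathcal{G}}g(\xi,\eta)\,d\nu^{d(\xi)}(\eta)\,d\nu^{x}(\xi)\,d\mu(x).\]
The assertion that $\Delta$ is an almost-everywhere homomorphism is exactly that $F=0$ holds $\tilde{\nu}$-almost everywhere, i.e.\ $\tilde{\nu}(N)=0$.

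The decisive step is to show $\supp\tilde{\nu}=\mathcal{G}^{(2)}$; granting this, the open $\tilde{\nu}$-null set $N$ must be empty, so $\Delta(\xi\eta)=\Delta(\xi)\Delta(\eta)$ on all of $\mathcal{G}^{(2)}$. To establish the full-support claim, let $U\subset\mathcal{G}^{(2)}$ be nonempty open and pick $(\xi_{0},\eta_{0})\in U$. Because $\supp\nu^{x}=r^{-1}(x)$, the fibre integrals detect $U$: the partner $\eta_{0}$ lies in $r^{-1}(d(\xi_{0}))=\supp\nu^{d(\xi_{0})}$, so the inner integral over the composable partners of $\xi$ is positive for $\xi$ near $\xi_{0}$, while $\xi_{0}\in r^{-1}(r(\xi_{0}))=\supp\nu^{r(\xi_{0})}$. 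The continuity axiom for the Haar system makes $x\mapsto\nu^{x}(V)$ lower semicontinuous for open $V$ (it is a supremum of the continuous functions $x\mapsto\int f\,d\nu^{x}$ with $0\le f\le 1$ and $\supp f\subset V$), so these positivity statements persist on a neighbourhood, and integrating against $\mu$ yields $\tilde{\nu}(U)>0$ provided $r(\xi_{0})\in\supp\mu$.

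The main obstacle is precisely this last proviso: the argument only delivers positivity of $\tilde{\nu}$ over $r^{-1}(\supp\mu)$, so I need $\supp\mu=\mathcal{G}^{(0)}$ for $\tilde{\nu}$ to have full support. This holds under the standing assumption that the quasi-invariant measure has full support; otherwise one restricts to the reduction $\mathcal{G}|_{\supp\mu}$, on which $\Delta$ stays continuous and the same conclusion applies. Finally, the paper's cocycles are $\R$-valued whereas $\Delta$ takes values in the topological group $\R_{+}^{\times}$, so composing with $\log$ converts the continuous multiplicative homomorphism just obtained into an element of $Z^{1}(\mathcal{G},\R)$, completing the identification of $\Delta$ as a continuous cocycle.
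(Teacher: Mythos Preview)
The paper does not actually prove this proposition: the sentence preceding it says that Renault's almost-everywhere result ``is rephrased as follows'' for continuous measures, and the proposition is then stated without proof. So there is nothing to compare your argument against on the paper's side; you are supplying the missing justification that the paper leaves to the reader and to \cite{Re}.

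Your argument is the natural one and is essentially correct. The measure you call $\tilde{\nu}$ is the standard measure $\nu^{(2)}$ on $\mathcal{G}^{(2)}$ with respect to which Renault proves the cocycle identity holds almost everywhere, so your invocation of his theorem is on target. The continuity-plus-full-support step is the right mechanism for upgrading an a.e.\ identity to a pointwise one, and your lower-semicontinuity remark for $x\mapsto\nu^{x}(V)$ is the clean way to propagate positivity of the iterated integral.

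You are also right to flag the full-support issue: without $\supp\mu=\mathcal{G}^{(0)}$ the conclusion can only be drawn on the reduction $\mathcal{G}|_{\supp\mu}$, and indeed outside $r^{-1}(\supp\mu)$ the Radon--Nikodym derivative is not even canonically defined, so a ``continuous version'' of $\Delta$ need not be unique there. The paper silently assumes this (as does most of the literature on quasi-invariant measures), so your caveat is an improvement in precision rather than a defect in your proof.
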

The measure $\mu$ defines a positive functional $\tau$ on the algebra $C_{c}(\mathcal{G})$. 
\[\begin{split}\tau:C_{c}(\mathcal{G})&\rightarrow \C\\
f&\mapsto \int_{\mathcal{G}^{(0)}}f d\mu.\end{split}\]
It extends to both $C^{*}(\mathcal{G})$ and $C^{*}_{r}(\mathcal{G})$, but in general does not yield a trace. If $\mu$ is quasi-invariant, $\Delta(\xi)\neq 0$ for all $\xi$ in $\mathcal{G}$. Hence we can compose it with the logarithm $\ln :\R_{+}\xrightarrow{\sim} \R$, to obtain a real valued cocycle
$c_{\mu}\in Z^{1}(\mathcal{G},\R)$. We will refer to this element as the \emph{Radon-Nikodym cocycle} on $\mathcal{G}$. If the measure $\mu$ is continuous, the Radon-Nikodym cocycle $c_{\mu}$ induces a one-parameter group $u_{t}$ of automorphisms of $C^{*}(\mathcal{G})$, as mentioned before proposition \ref{one}. Given a one-parameter group $u_{t}$ of automorphisms of a $C^{*}$-algebra $A$, the set of \emph{analytic elements for } $u_{t}$ consists of those $a\in A$ for which the map $t\mapsto u_{t}(a)$ extends to an entire function $\C\rightarrow \C$. It is a dense *-subalgebra of $A$ (see, for example \cite{Ped}, section 8.12.).
\begin{definition}Let $A$ be a $C^{*}$-algebra and $u_{t}$ a strongly continuous one parameter group of automorphisms of $A$. A KMS- $\beta$-\emph{state} on $A$, relative to $u_{t}$, is a state $\sigma:A\rightarrow\C$, such that the for all analytic elements $a,b$  of $A$ the function
\[F:t\mapsto \sigma(au_{t}b)\] admits a continuous bounded continuation to the strip $\{z\in\C:0\leq\im z\leq\beta\}$ that is homolomorphic on the interior, such that
\[F(t+i\beta)=\sigma(u_{t}(b)a).\]
\end{definition}
We refer to \cite{Ped} for a detailed discussion of KMS-states.
\begin{theorem}[\cite{Re}]\label{KMS} Let $\mu$ be a continuous quasi-invariant measure on $\mathcal{G}$. The functional $\tau$ is a KMS $-1$-state for the one parameter group of automorphisms associated to the Radon-Nikodym cocycle on $\mathcal{G}$.
\end{theorem}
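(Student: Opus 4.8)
The plan is to verify the KMS condition directly on the dense, $u_{t}$-invariant subalgebra $C_{c}(\mathcal{G})$. First I would observe that every $f\in C_{c}(\mathcal{G})$ is analytic for $u_{t}$: since $\Delta$ is continuous and strictly positive, it is bounded above and bounded away from $0$ on the compact set $\supp f$, so $u_{z}(f)(\xi):=\Delta(\xi)^{iz}f(\xi)$ is defined for every $z\in\C$, and $z\mapsto u_{z}(f)$ is an entire $C_{c}(\mathcal{G})$-valued map in the inductive limit topology, hence an entire $C^{*}(\mathcal{G})$-valued map. Because the KMS condition need only be checked on a norm-dense set of analytic elements, and $C_{c}(\mathcal{G})$ is such a set, the whole statement reduces to a computation there.

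The computational core is to express $\tau$ on a product. Restricting the convolution $f*g(\eta)=\int_{\mathcal{G}}f(\xi)g(\xi^{-1}\eta)\,d\nu^{r(\eta)}\xi$ to a unit $\eta=x\in\mathcal{G}^{(0)}$, where $\xi^{-1}x=\xi^{-1}$ on $r^{-1}(x)$, and using the definitions of $\tau$ and of the induced measure $\nu^{\mu}$, I obtain
\[\tau(f*g)=\int_{\mathcal{G}^{(0)}}\int_{\mathcal{G}}f(\xi)g(\xi^{-1})\,d\nu^{x}(\xi)\,d\mu(x)=\int_{\mathcal{G}}f(\xi)g(\xi^{-1})\,d\nu^{\mu}(\xi).\]
Applying this with the roles of $f,g$ exchanged and then substituting $\xi\mapsto\xi^{-1}$ is the delicate step. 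Here I would use that the inverse measure satisfies $\int_{\mathcal{G}}h(\xi)\,d\nu_{\mu}(\xi)=\int_{\mathcal{G}}h(\xi^{-1})\,d\nu^{\mu}(\xi)$, that $d\nu^{\mu}=\Delta\,d\nu_{\mu}$ by definition of the modular function, and that $\Delta$ is a homomorphism so that $\Delta(\xi^{-1})=\Delta(\xi)^{-1}$; combining these yields
\[\tau(g*f)=\int_{\mathcal{G}}f(\xi)g(\xi^{-1})\,\Delta(\xi)^{-1}\,d\nu^{\mu}(\xi).\]

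Now introduce $F(z):=\tau(f*u_{z}(g))$. Since $u_{z}(g)(\xi^{-1})=\Delta(\xi^{-1})^{iz}g(\xi^{-1})=\Delta(\xi)^{-iz}g(\xi^{-1})$, the first formula gives the explicit expression
\[F(z)=\int_{\mathcal{G}}f(\xi)g(\xi^{-1})\,\Delta(\xi)^{-iz}\,d\nu^{\mu}(\xi).\]
Comparing with the second formula, the required boundary relation $F(t+i\beta)=\tau(u_{t}(g)*f)$ holds for $\beta=-1$: indeed $F(t-i)=\int_{\mathcal{G}}f(\xi)g(\xi^{-1})\Delta(\xi)^{-it-1}\,d\nu^{\mu}(\xi)$, which equals $\tau(u_{t}(g)*f)$ upon substituting $u_{t}(g)(\xi^{-1})=\Delta(\xi)^{-it}g(\xi^{-1})$ into the second formula, and at $t=0$ one reads off $F(-i)=\tau(g*f)$. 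This identifies $\tau$ as a KMS $-1$-state, modulo the analytic properties of $F$.

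The step I expect to require the most care is showing that $F$ is bounded and continuous on the closed strip $\{z\in\C:-1\le\im z\le 0\}$ and holomorphic in its interior. For fixed $f,g$ the integrand is supported on the compact set $\supp f\cap(\supp g)^{-1}$, on which $\Delta$ takes values in a compact subinterval $[m,M]\subset\Ru$; since $|\Delta(\xi)^{-iz}|=\Delta(\xi)^{\im z}$, the integrand is dominated uniformly in $z$ over the strip by an integrable function, while for each $\xi$ the map $z\mapsto\Delta(\xi)^{-iz}$ is entire. Dominated convergence then gives continuity up to the boundary, and Morera's theorem (or differentiation under the integral sign) gives holomorphy in the interior. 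Finally, the passage from $C_{c}(\mathcal{G})$ to arbitrary analytic elements is the standard density argument, completing the verification.
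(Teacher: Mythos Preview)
The paper does not supply its own proof of this theorem: it is stated with the attribution \cite{Re} and no proof environment follows. Your proposal is therefore not being compared against an argument in the paper but against a result quoted from Renault's monograph.

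That said, your direct verification is correct and is essentially the computation one finds in Renault. The identification $\tau(f*g)=\int_{\mathcal{G}}f(\xi)g(\xi^{-1})\,d\nu^{\mu}(\xi)$ and the use of $d\nu_{\mu}=\Delta^{-1}\,d\nu^{\mu}$ together with $\Delta(\xi^{-1})=\Delta(\xi)^{-1}$ to obtain $\tau(g*f)=\int_{\mathcal{G}}f(\xi)g(\xi^{-1})\Delta(\xi)^{-1}\,d\nu^{\mu}(\xi)$ are exactly right, and since $u_{z}(g)(\xi^{-1})=\Delta(\xi)^{-iz}g(\xi^{-1})$ the boundary relation $F(t-i)=\tau(u_{t}(g)*f)$ follows as you write. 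Your argument for analyticity of $C_{c}(\mathcal{G})$ and for the required boundedness and holomorphy of $F$ on the strip is also sound: the compact support forces $\ln\Delta$ into a bounded interval, so dominated convergence and Morera apply. The one point you pass over is that $\tau$ is actually a \emph{state}; the paper has already asserted positivity of $\tau$, and normalization amounts to $\mu$ being a probability measure, which is implicit in calling $\tau$ a state. With that caveat, your argument is complete.
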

Recent work by Exel \cite{Exel} and Kumjian and Renault \cite{Kumre} considers the construction of KMS-states for one parameter groups coming from cocycles. In  \cite{Kumre} it is shown that in case $\mathcal{H}=\ker c$ is principal, every KMS-state for the $u_{t}$ defined by $c$ comes from a quasi-invariant probability measure on $\mathcal{G}^{(0)}$.\newline\newline

A measured groupoid is called \emph{unimodular} if $\Delta=1$ $\nu^{\mu}$-almost everywhere. For continuous measures, the following proposition is a corollary of theorem \ref{KMS}, but it holds for general measures.
\begin{proposition}\label{uni} Let $\mathcal{G}$ be a unimodular measured groupoid. Then the functional $\tau:C_{c}(\mathcal{G})\rightarrow\C$ is a trace.
\end{proposition}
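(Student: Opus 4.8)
The plan is to verify the defining identity of a trace, $\tau(f*g)=\tau(g*f)$, on the dense $*$-subalgebra $C_{c}(\mathcal{G})$; since $\tau$ extends continuously to $C^{*}(\mathcal{G})$, the trace property on $C_{c}(\mathcal{G})$ propagates to the completion. First I would unwind $\tau(f*g)=\int_{\mathcal{G}^{(0)}}(f*g)\,d\mu$ using the convolution formula. For a unit $u\in\mathcal{G}^{(0)}$ one has $r(u)=u$, and for $\xi$ in the support of $\nu^{u}$, i.e.\ $r(\xi)=u$, the product $\xi^{-1}u$ collapses to $\xi^{-1}$. Hence
\[\tau(f*g)=\int_{\mathcal{G}^{(0)}}\int_{\mathcal{G}}f(\xi)g(\xi^{-1})\,d\nu^{u}(\xi)\,d\mu(u)=\int_{\mathcal{G}}f(\xi)g(\xi^{-1})\,d\nu^{\mu}(\xi),\]
by the definition of the induced measure $\nu^{\mu}$, and symmetrically $\tau(g*f)=\int_{\mathcal{G}}g(\xi)f(\xi^{-1})\,d\nu^{\mu}(\xi)$.

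Next I would compare these two integrals through the inversion map $\xi\mapsto\xi^{-1}$. By the very definition of the inverse measure, $\int_{\mathcal{G}}H(\xi)\,d\nu^{\mu}(\xi)=\int_{\mathcal{G}}H(\xi^{-1})\,d\nu_{\mu}(\xi)$ for $H\in C_{c}(\mathcal{G})$; applying this to $H(\xi)=g(\xi)f(\xi^{-1})$ rewrites $\tau(g*f)=\int_{\mathcal{G}}f(\xi)g(\xi^{-1})\,d\nu_{\mu}(\xi)$. The two expressions for $\tau(f*g)$ and $\tau(g*f)$ now differ only in the measure against which the same function $f(\xi)g(\xi^{-1})$ is integrated, and the Radon--Nikodym derivative relating them is precisely the modular function, $d\nu^{\mu}=\Delta\,d\nu_{\mu}$. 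Consequently
\[\tau(f*g)-\tau(g*f)=\int_{\mathcal{G}}f(\xi)g(\xi^{-1})\bigl(\Delta(\xi)-1\bigr)\,d\nu_{\mu}(\xi),\]
and under the unimodularity hypothesis $\Delta=1$ $\nu^{\mu}$-almost everywhere this vanishes, giving $\tau(f*g)=\tau(g*f)$.

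The main point requiring care is purely bookkeeping: keeping straight which of $\nu^{\mu}$ and $\nu_{\mu}$ appears at each stage, and checking that the inversion substitution introduces the modular function $\Delta$ with the correct exponent, so that the hypothesis $\Delta=1$ exactly cancels the discrepancy. I would remark that in the continuous case the statement also follows from Theorem \ref{KMS}: unimodularity forces $c_{\mu}=\ln\Delta=0$, so the associated one-parameter group is $u_{t}=\id$, and the KMS $-1$-condition $F(t+i\beta)=\sigma(u_{t}(b)a)$ degenerates, for the constant function $F(t)=\tau(ab)$, into $\tau(ab)=\tau(ba)$. The direct computation above, however, makes no continuity assumption on $\Delta$ and therefore covers general measured groupoids, as asserted.
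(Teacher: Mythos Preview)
Your proof is correct and follows essentially the same route as the paper: unwind $\tau(f*g)$ using the convolution formula and the fact that $\xi^{-1}u=\xi^{-1}$ for $r(\xi)=u\in\mathcal{G}^{(0)}$, then invoke unimodularity to pass from $\nu^{\mu}$ to $\nu_{\mu}$ via the inversion map. The paper's version is terser---it performs the substitution $\xi\mapsto\xi^{-1}$ directly inside the double integral $\int_{\mathcal{G}^{(0)}}\int_{\mathcal{G}}\cdots\,d\nu^{x}d\mu$ in one line---but the content is identical, and your remark linking the continuous case to Theorem~\ref{KMS} exactly matches the paper's observation preceding the proposition.
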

\begin{proof}Compute
\[\begin{split}\tau(f*g)
&=\int_{\mathcal{G}^{(0)}}\int_{\mathcal{G}}f(\xi)g(\xi^{-1}x)d\nu^{x}\xi d\mu(x)\\ &=\int_{\mathcal{G}^{(0)}}\int_{\mathcal{G}}f(\xi)g(\xi^{-1})d\nu^{x}\xi d\mu(x)\\
&=\int_{\mathcal{G}^{(0)}}\int_{\mathcal{G}}f(\xi^{-1})g(\xi)d\nu^{x}\xi d\mu(x)\\&=\tau(g*f),\end{split}\] where we used unimodularity of $\mathcal{G}$ in the third line.\end{proof}
\begin{corollary}\label{measuretrace}Let $\mathcal{G}$ be a groupoid with Haar system, $\mu$ a continuous quasi-invariant measure such that the cocycle $\Delta$ is regular. Then $\tau:C^{*}(\mathcal{H})\rightarrow\C$ is a trace, for
$\mathcal{H}=\ker\Delta$.
\end{corollary}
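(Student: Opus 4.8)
The starting point is the observation that $\mathcal{H}=\ker\Delta=\ker c_{\mu}$, so that $\Delta$ restricts to the constant function $1$ on $\mathcal{H}$ (equivalently $c_{\mu}|_{\mathcal{H}}=0$). The plan is to deduce the statement from Proposition \ref{uni} by showing that $\mathcal{H}$, equipped with the measure $\mu$, is a \emph{unimodular} measured groupoid, and then to pass from the dense subalgebra $C_{c}(\mathcal{H})$ to $C^{*}(\mathcal{H})$. Since $\tau$ is continuous on $C^{*}(\mathcal{H})$ and multiplication is jointly continuous, the trace identity $\tau(ab)=\tau(ba)$ for all $a,b\in C^{*}(\mathcal{H})$ will follow once it is established for $a,b\in C_{c}(\mathcal{H})$.

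I would first record the computation isolating what is needed. Let $\nu_{\mathcal{H}}$ be a Haar system on $\mathcal{H}$, which exists because $\Delta$ is assumed regular, and let $\nu_{\mathcal{H}}^{\mu}$ be the measure it induces from $\mu$. Repeating the manipulation in the proof of Proposition \ref{uni}, now inside $\mathcal{H}$, gives for $f,g\in C_{c}(\mathcal{H})$
\[
\tau(f*g)=\int_{\mathcal{H}}f(\chi)g(\chi^{-1})\,d\nu_{\mathcal{H}}^{\mu}(\chi),\qquad
\tau(g*f)=\int_{\mathcal{H}}g(\chi)f(\chi^{-1})\,d\nu_{\mathcal{H}}^{\mu}(\chi).
\]
After the substitution $\chi\mapsto\chi^{-1}$, the two right-hand sides coincide exactly when $\nu_{\mathcal{H}}^{\mu}$ is invariant under inversion, that is, when the modular function $\Delta_{\mathcal{H}}$ of $\mu$ relative to $\mathcal{H}$ is identically $1$. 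As $\Delta|_{\mathcal{H}}\equiv 1$, everything reduces to identifying $\Delta_{\mathcal{H}}$ with the restriction $\Delta|_{\mathcal{H}}$.

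This identification is the main obstacle, and it is the groupoid counterpart of the classical fact that the kernel of the modular function of a locally compact group is unimodular: for a group one has $\Delta_{N}=\Delta_{G}|_{N}$ precisely because the quotient $G/N$ carries an invariant measure, and for $N=\ker\Delta_{G}$ the quotient embeds in $\Ru$ and is therefore unimodular. I would reproduce this by disintegrating the induced measure $\nu^{\mu}$ on $\mathcal{G}$ along the quotient map $\rho:\mathcal{G}\to\mathcal{G}/\mathcal{H}$, writing it as an integral of the Haar measures $\nu_{\mathcal{H}}$ against a transverse measure on $\mathcal{G}/\mathcal{H}$; comparing the behaviour of both sides under inversion yields a factorisation $\Delta=\Delta_{\mathcal{H}}\cdot(\text{transverse Jacobian})$ along $\mathcal{H}$. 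Using the homeomorphism of Lemma \ref{quotient} to view $\mathcal{G}/\mathcal{H}$ inside $\mathcal{G}^{(0)}\times\R$, the transverse variable is the cocycle coordinate $c=\ln\Delta$, whose fibrewise invariant measure is Lebesgue measure on $\R$; since this is translation invariant, the transverse Jacobian is trivial and $\Delta_{\mathcal{H}}=\Delta|_{\mathcal{H}}=1$. Making this disintegration and the resulting factorisation of modular functions precise is the step I expect to require the most care. I note that one may instead argue through Theorem \ref{KMS}: there $\tau$ is a KMS-$(-1)$-state for the one-parameter group $u_{t}$ on $C^{*}(\mathcal{G})$, and since $c_{\mu}|_{\mathcal{H}}=0$ this group acts trivially on the invariant subalgebra generated by $C_{c}(\mathcal{H})$, so that the KMS condition there collapses to $\tau(ab)=\tau(ba)$; this route trades the disintegration for the verification that the KMS state restricts correctly to $C^{*}(\mathcal{H})$.
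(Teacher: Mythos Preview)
The paper gives no proof at all for this corollary; it is stated immediately after Proposition~\ref{uni} and Theorem~\ref{KMS} and is meant to be read as following directly from those results. The implicit argument the paper has in mind is most plausibly the KMS one you mention at the end: since $c_{\mu}$ vanishes on $\mathcal{H}$, the one-parameter group $u_{t}$ is trivial on functions supported in $\mathcal{H}$, so the KMS identity of Theorem~\ref{KMS} degenerates to the trace identity. Your proposal is therefore considerably more careful than the paper, and you correctly isolate the point that needs justification, namely that the modular function $\Delta_{\mathcal{H}}$ of $(\mathcal{H},\nu_{\mathcal{H}},\mu)$ equals $\Delta|_{\mathcal{H}}$, a fact the paper silently takes for granted.

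There is, however, a genuine gap in your disintegration argument. You invoke Lemma~\ref{quotient} to identify $\mathcal{G}/\mathcal{H}$ with a subspace of $\mathcal{G}^{(0)}\times\R$ and then use Lebesgue measure as the transverse measure. But Lemma~\ref{quotient} requires $c$ to be \emph{exact}, whereas Corollary~\ref{measuretrace} only assumes $\Delta$ is \emph{regular}. Without exactness the map $\overline{r\times c}$ need not be a homeomorphism onto its image, and the disintegration you describe is not available in the form you state it. Even granting exactness, the claim that the transverse Jacobian is trivial because ``Lebesgue measure is translation invariant'' is not justified: the disintegration of $\nu^{\mu}$ along $\rho$ need not produce Lebesgue measure on the $\R$-factor, and the transverse measure is determined by $\mu$ and the Haar systems, not chosen freely.

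The KMS route you sketch at the end is both closer to the paper's intent and avoids these issues; the point you flag there---that one must check $\tau$ restricts appropriately, since $C_{c}(\mathcal{H})$ does not in general embed in $C_{c}(\mathcal{G})$ by extension by zero---is the right thing to worry about, and is where the remaining work lies. I would develop that route rather than the disintegration.
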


\begin{corollary}\label{indextrace}Let $\mathcal{G}$ be a groupoid with Haar system, $\mu$ a
continuous quasi-invariant measure such that the cocycle $\Delta$ is exact.
Then $\mu$ induces an index homomorphism
$\Ind_{\mu}:K_{1}(C^{*}(\mathcal{G}))\rightarrow\C$.
\end{corollary}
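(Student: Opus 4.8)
The plan is to realise $\Ind_\mu$ as a composition of two homomorphisms, one induced by the unbounded bimodule of Theorem \ref{KK} and one induced by the trace of Corollary \ref{measuretrace}; since both ingredients are already in place, the proof is essentially an assembly. First I would check that Theorem \ref{KK} actually applies, i.e. that the real-valued cocycle attached to $\mu$ is exact. The Radon--Nikodym cocycle is $c_\mu = \ln\circ\Delta$, and since $\ln\colon\R_+^\times\to\R$ is a homeomorphism, the map $\id\times\ln$ is a homeomorphism and we have $r\times c_\mu = (\id\times\ln)\circ(r\times\Delta)$. Post-composing the quotient map $r\times\Delta$ (onto its image) with a homeomorphism yields again a quotient map onto the corresponding image, and $\ker c_\mu = \ker\Delta = \mathcal{H}$. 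Hence $\Delta$ exact means exactly that $c_\mu$ is exact in the sense of Definition \ref{exact}.

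With this in hand, Theorem \ref{KK} produces the odd $\R$-equivariant unbounded bimodule $(\mathpzc{E}^{\mathcal{G}},D)$, whose class $[(\mathpzc{E}^{\mathcal{G}},D)]\in KK_1(C^{*}(\mathcal{G}),C^{*}(\mathcal{H}))$ yields, via Kasparov's product theorem, a homomorphism
\[
\otimes_{[D]}\colon K_1(C^{*}(\mathcal{G}))\longrightarrow K_0(C^{*}(\mathcal{H})),
\]
the total degree being $1+1\equiv 0\pmod 2$. Next, exactness implies regularity, so Corollary \ref{measuretrace} guarantees that $\tau$ is a bounded positive \emph{trace} on $C^{*}(\mathcal{H})$. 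Any such trace induces a homomorphism $\tau_{*}\colon K_0(C^{*}(\mathcal{H}))\to\C$ in the standard way: extend $\tau$ to matrix algebras as $\tau\otimes\Tr$ and evaluate on differences of projection classes, the result being well defined because $\tau_{*}$ is constant on Murray--von Neumann equivalence classes and additive on orthogonal direct sums.

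Setting
\[
\Ind_{\mu}:=\tau_{*}\circ(\,\cdot\otimes_{[D]}\,)
\]
then gives the desired homomorphism $K_1(C^{*}(\mathcal{G}))\to\C$ as a composition of homomorphisms. The one step that is not purely formal — and therefore the main technical hinge — is precisely that $\tau$ restricts to a trace rather than merely a state on $C^{*}(\mathcal{H})$: were it only a state, $\tau_{*}$ would fail to descend to $K_0$. This is supplied by Corollary \ref{measuretrace}, which in turn rests on the unimodularity of $\mathcal{H}=\ker\Delta$ (the modular function being identically $1$ on the kernel), so that the KMS-state of Theorem \ref{KMS} becomes tracial upon restriction to $C^{*}(\mathcal{H})$.
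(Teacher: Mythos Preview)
Your proposal is correct and follows essentially the same route as the paper: apply Theorem \ref{KK} to the Radon--Nikodym cocycle to obtain $[D]\in KK_1(C^{*}(\mathcal{G}),C^{*}(\mathcal{H}))$, then compose the resulting Kasparov-product map with the trace homomorphism $\tau_{*}$ supplied by Corollary \ref{measuretrace}. Your added verification that exactness of $\Delta$ (valued in $\R_+^\times$) is equivalent to exactness of $c_\mu=\ln\circ\Delta$ (valued in $\R$) is a point the paper leaves implicit, and your remarks on why a genuine trace is needed for $\tau_{*}$ to descend to $K_0$ are welcome but not additional to the paper's argument.
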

\begin{proof} By theorem \ref{KK}, the Radon-Nikodym cocycle $c_{\mu}$ defines an element $[D]\in
KK_{1}^{\R}(C^{*}(\mathcal{G}),C^{*}(\mathcal{H}))$ and the Kasparov product with
$[D]$ gives a group homomorphism
$\otimes_{[D]}:K_{1}(C^{*}(\mathcal{G}))\rightarrow K_{0}(C^{*}(\mathcal{H}))$.
The trace $\tau$ induces a homomorphism
$\tau_{*}:K_{0}(C^{*}(\mathcal{H}))\rightarrow\C$. Hence we can define
$\Ind_{\mu}:=\tau_{*}\circ\otimes_{[D]}:K_{1}(C^{*}(\mathcal{G}))\rightarrow
\C$.\end{proof}
Note that, in fact, we get an index map $K_{1}(C^{*}(\mathcal{G}))\rightarrow \C$ for any exact cocycle whose kernel is unimodular with respect to some quasi invariant measure.
\subsection{Integral cocycles on \'{e}tale groupoids}
In this section we focus on cocycles $c:\mathcal{G}\rightarrow \Z$ and relate theorem \ref{KK} to the construction of equivariant $KK$-cycles coming from circle actions given in \cite{CNNR}. First observe that the 1-parameter group (\ref{1par}) gives rise not just to an $\R$-action, but to an action of the circle $\mathbb{T}$. Moreover, we have the following convenient lemma.
\begin{lemma}\label{etalex} Let $\mathcal{G}$ be an \'{e}tale groupoid and $c:\mathcal{G}\rightarrow \Z$ a continuous cocycle. Then $c$ is exact.
\end{lemma}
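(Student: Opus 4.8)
The plan is to verify the two conditions in Definition \ref{exact}: that $\mathcal{H}=\ker c$ admits a Haar system (\emph{regularity}), and that $r\times c$ is a quotient map onto its image (\emph{exactness}). Regularity is immediate, since as already noted in the text every closed subgroupoid of an \'{e}tale groupoid carries the Haar system of counting measures along the fibers, and $\mathcal{H}=c^{-1}(0)$ is closed. So the whole content is the quotient-map condition, and for this I would not argue directly but instead reduce to Lemma \ref{openclosed}: it suffices to show that $r\times c$ is open onto its image. I choose openness rather than closedness because \'{e}taleness gives me exactly that $r$ is a local homeomorphism, hence an open map, whereas $r$ need not be closed.

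The key observation that makes the openness work is that $c$ takes values in the \emph{discrete} space $\Z\subset\R$, so a continuous such $c$ is locally constant. Concretely, $\mathcal{G}=\bigsqcup_{n\in\Z}c^{-1}(n)$ is a partition into clopen sets, and on the piece $c^{-1}(n)$ the map $r\times c$ is simply $\xi\mapsto(r(\xi),n)$. Given an open $U\subseteq\mathcal{G}$, I would set $U_{n}:=U\cap c^{-1}(n)$, which is open, and observe that $(r\times c)(U)=\bigcup_{n}r(U_{n})\times\{n\}$; each $r(U_{n})$ is open in $\mathcal{G}^{(0)}$ precisely because $r$ is open and $U_{n}$ is open.

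It then remains to see that such a union of horizontal open slices is open in the image $Y:=(r\times c)(\mathcal{G})$ for the subspace topology inherited from $\mathcal{G}^{(0)}\times\R$. This is where the discreteness of $\Z$ re-enters, and it is the only genuinely delicate point: since $Y\subseteq\mathcal{G}^{(0)}\times\Z$, the open set $r(U_{n})\times(n-\frac{1}{2},n+\frac{1}{2})$ of $\mathcal{G}^{(0)}\times\R$ meets $Y$ in exactly $r(U_{n})\times\{n\}$, so each slice, and hence the union $(r\times c)(U)$, is relatively open in $Y$. Thus $r\times c$ is open onto its image, and Lemma \ref{openclosed} finishes the proof. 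I expect the main obstacle to be precisely this interaction between the subspace topology on $Y$ and the discreteness of the target $\Z$: one must resist trying to prove openness of $r\times c$ as a map into all of $\mathcal{G}^{(0)}\times\R$ (which fails, the image being a disjoint union of slices with empty interior), and instead establish openness \emph{onto the image}, which is what the argument behind Lemma \ref{openclosed} actually uses.
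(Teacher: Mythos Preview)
Your proof is correct and uses the same underlying ingredients as the paper: the discreteness of $\Z$ gives a clopen decomposition $\mathcal{G}=\bigsqcup_{n}c^{-1}(n)$, and the openness of $r$ (from \'{e}taleness) then handles each slice. The only difference is organizational: the paper verifies the quotient-map condition directly (taking $U\subset(r\times c)(\mathcal{G})$ with open preimage, decomposing $U=\bigsqcup_{n}U_{n}$ with $U_{n}=U\cap(\mathcal{G}^{(0)}\times\{n\})$, and using openness of $r$ to conclude each $U_{n}$ is relatively open), whereas you show that $r\times c$ is open onto its image and invoke Lemma~\ref{openclosed}. These are dual formulations of the same fact, and your explicit remark that openness must be checked \emph{onto the image} rather than into $\mathcal{G}^{(0)}\times\R$ is exactly the point where carelessness could cause trouble; the paper's direct argument sidesteps this by never leaving the image.
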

\begin{proof}By \cite{Re}, proposition I.3.16, $\ker c$ is a locally compact groupoid with Haar system, so $c$ is regular. Since $r\times c$ is continuous, we have that $(r\times c)^{-1}(U)$ is open whenever $U$ is open in $\mathcal{G}^{(0)}\times\R$. To show that $c$ is exact (definition \ref{exact}) let \[U\subset (r\times c)(\mathcal{G})\subset\mathcal{G}^{(0)}\times\R,\] be such that  $(r\times c)^{-1}(U)$ is open in $\mathcal{G}$. Any $U\subset (r\times c)(\mathcal{G})$ is a disjoint union of sets $U_{n}$, $n\in\Z$, with \[U_{n}:=U\cap (\mathcal{G}^{(0)}\times\{n\}),\] because $\Z$ is discrete. Moreover $c^{-1}(n)$ is open, so
\[(r\times c)^{-1}(U_{n})=(r\times c)^{-1}(U)\cap(r\times c)^{-1}(\mathcal{G}^{(0)}\times\{n\}),\]
is open if and only if $(r\times c)^{-1}(U)$ is open. So it suffices to show that $U_{n}$ is open (in the relative topology) whenever $(r\times c)^{-1}(U_{n})$ is open. We have
\[(r\times c)^{-1}(U_{n})=\{\xi\in\mathcal{G}:(r(\xi),c(\xi))=(r(\xi),n)\in U_{n}\},\] 
and $r:\mathcal{G}\rightarrow\mathcal{G}^{(0)}$ is an open map, so we are done.
\end{proof}
This lemma provides us with a myriad of examples, e.g. the class of algebras studied by Renault in \cite{Cuntzlike}. These algebras are constructed from semi direct product groupoids $X\rtimes\sigma$, associated to a local homeomorphism $\sigma$, defined on an open subset $\mathfrak{Dom}\sigma$ of a topological space $X$, onto an open subset of $X$.  The semidirect product groupoid $X\rtimes\sigma$ is defined as
\[\{(x,n,y)\in X\times\Z\times X: \exists k\in\N\quad x\in\mathfrak{Dom}\sigma^{k+n}(x),y\in\mathfrak{Dom}\sigma^{k}(y), \sigma^{k+n}(x)=\sigma^{k}(y)\},\]
with groupoid operations coming from the principal groupoid $X\times X$ and addition in $\Z$. It
can be equipped with a topology that makes it \'{e}tale, in such a way that the map $c:(x,n,y)\mapsto n$ is a continuous cocycle
 $X\rtimes\sigma\rightarrow \Z$. See \cite{Cuntzlike} for more details on this construction. These groupoids hence fit into our framework by lemma \ref{etalex}, and the $KK$-cycle constructed in theorem \ref{KK} is defined for these algebras. This class properly includes all graph $C^{*}$-algebras, and hence all Cuntz- and Cuntz-Krieger algebras (\cite{CuKr, Cuntzlike, Re}). The latter ones were the main source of examples in  \cite{CNNR}, and we will now link our construction to the results in that paper.\newline

In \cite{CNNR}, a circle action $u_{t}$ (i.e. a periodic $\R$-action) on a $C^{*}$-algebra $A$ is the starting point for the construction of an equivariant $KK$-cycle, which is basically given by the generator of this action. Let
\[F:=\{a\in A: u_{t}(a)=a\},\] be the fixed point algebra, and let 
\[A_{k}:=\{a\in A: \forall t\in \R:\quad u_{t}(a)=e^{ikt}a\},\] denote the eigenspaces of the action. In order for the generator to have compact resolvent, the action has to satisfy the following property:
\begin{definition}[\cite{CNNR}] A strongly continuous action of $S^{1}$ on a $C^{*}$-algebra $A$ satisfies the \emph{spectral subspace assumption} (SSA) if the subspaces $F_{k}=\overline{A_{k}A^{*}_{k}}$ are complemented ideals in the fixed point algebra $F$.
\end{definition}
One defines a faithful conditional expectation $\rho:A\rightarrow F$ by
\[\rho(a):=\frac{1}{2\pi}\int_{0}^{2\pi}u_{t}(a)dt,\]
and uses this to define an $F$-valued inner product on by setting $\langle a,b\rangle:=\rho(a^{*}b)$. The completion of $A$ with respect to this inner product is denoted $\mathpzc{E}_{\rho}$. It carries an obvious left $A$ action by adjointable operators. Subsequenlty define projection operators $\rho_{k}:\mathpzc{E}_{\rho}\rightarrow \mathpzc{E}_{\rho},$
\[e\mapsto \frac{1}{2\pi}\int_{0}^{2\pi}e^{-ikt}u_{t}(e)dt.\]
The following equivalence holds:
\begin{lemma}[ \cite{CNNR}]\label{cpt} The circle action on $A$ satisfies the SSA if and only if for all $a\in A$ the operator $a\rho_{k}$ is compact.
\end{lemma}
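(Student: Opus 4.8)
The plan is to reduce the statement, for each fixed $k$, to a purely $C^{*}$-algebraic characterization of when the ideal $F_{k}$ is complemented in $F$. Throughout I view $\mathpzc{E}_{\rho}$ as the orthogonal direct sum of the submodules $\overline{A_{m}}$, $m\in\Z$: these are mutually orthogonal because for $x\in A_{j}$, $y\in A_{m}$ one has $x^{*}y\in A_{m-j}$, so $\langle x,y\rangle=\rho(x^{*}y)=0$ unless $j=m$. The projection $\rho_{k}$ is then precisely the orthogonal projection onto $\overline{A_{k}}$, and for $x,y\in A_{k}$ the inner product is simply $\langle x,y\rangle=\rho(x^{*}y)=x^{*}y$, since $x^{*}y\in A_{0}=F$.

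The first step is to pass from an arbitrary $a\in A$ to elements of $F$. Using $u_{t}(z)=e^{ikt}z$ for $z\in A_{k}$ together with the integral formula for $\rho_{k}$, a short computation gives $\rho_{k}(a^{*}az)=\rho(a^{*}a)z$ for $z\in A_{k}$, and hence the operator identity $(a\rho_{k})^{*}(a\rho_{k})=\rho_{k}a^{*}a\rho_{k}=\rho(a^{*}a)\rho_{k}$, where $\rho(a^{*}a)=\langle a,a\rangle\in F$. Since an adjointable operator $T$ is compact if and only if $T^{*}T$ is compact (write $T=\lim_{n}T\,f_{n}(T^{*}T)$ with $f_{n}(x)=x(\tfrac{1}{n}+x)^{-1}$), the operator $a\rho_{k}$ is compact if and only if $\rho(a^{*}a)\rho_{k}$ is. For the nontrivial implication, polarization shows that the operators $\rho(a^{*}b)\rho_{k}$ are then all compact, and since $\{\rho(a^{*}b):a,b\in A\}$ has dense linear span in $F$ (it already contains $a^{*}b$ for $a,b\in F$, whose span is dense in $F$) and $f\mapsto f\rho_{k}$ is norm-contractive into the closed subspace of compacts, I conclude that $a\rho_{k}$ is compact for every $a\in A$ if and only if $f\rho_{k}$ is compact for every $f\in F$.

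Next I identify the relevant algebra of compacts. On $\overline{A_{k}}$ the rank-one operator $\theta_{x,y}$ with $x,y\in A_{k}$ acts by $\theta_{x,y}(z)=x\langle y,z\rangle=xy^{*}z$, i.e.\ by left multiplication by $xy^{*}\in A_{k}A_{k}^{*}$. Hence $\K_{F}(\overline{A_{k}})$ is exactly $F_{k}=\overline{A_{k}A_{k}^{*}}$ acting by left multiplication, and this action is faithful: if $f\in F_{k}$ annihilates $A_{k}$ then it annihilates $A_{k}A_{k}^{*}$, so $fF_{k}=0$, and taking $f^{*}\in F_{k}$ gives $ff^{*}=0$. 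Via the canonical isomorphism $\Endst_{F}(\overline{A_{k}})\cong M(\K_{F}(\overline{A_{k}}))=M(F_{k})$, the left $F$-action sends $f$ to $f\rho_{k}$, which is exactly the canonical multiplier of the ideal $F_{k}\subset F$. Thus $f\rho_{k}$ lies in $F_{k}\subset M(F_{k})$, i.e.\ is compact, for every $f\in F$ if and only if the canonical homomorphism $F\to M(F_{k})$ has image contained in $F_{k}$.

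Finally I invoke the standard fact that a closed ideal $I$ in a $C^{*}$-algebra $B$ is complemented, $B=I\oplus I^{\perp}$, precisely when the canonical map $B\to M(I)$ has image in $I$: in that case the map is a retraction onto $I$ with kernel the annihilator ideal $I^{\perp}$, forcing the direct sum, while conversely a direct summand visibly maps into itself. Applying this with $B=F$ and $I=F_{k}$, and running over all $k$, yields the equivalence with the SSA. The main obstacle is the first reduction: one must verify the operator identity $(a\rho_{k})^{*}(a\rho_{k})=\rho(a^{*}a)\rho_{k}$ with care, since the Fourier \emph{components} of a general $a\in A$ need not converge in norm, so it is the integral formula for $\rho_{k}$ rather than a formal grading argument that makes the computation rigorous; after that, the density and continuity used to let the positive elements $\rho(a^{*}a)$ exhaust $F$ complete the passage from $A$ to $F$.
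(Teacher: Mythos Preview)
The paper does not give its own proof of this lemma: it is quoted from \cite{CNNR} and used as a black box. So there is nothing in the present paper to compare your argument against.

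That said, your argument is essentially correct and is in fact close to the way the result is proved in \cite{CNNR}. A couple of small points are worth tightening. First, when you pass from compactness of $f\rho_{k}$ on $\mathpzc{E}_{\rho}$ to compactness of left multiplication by $f$ on $\overline{A_{k}}$, you are implicitly using that $f\in F=A_{0}$ preserves $\overline{A_{k}}$, so that $f\rho_{k}=\iota_{k}\,(f|_{\overline{A_{k}}})\,\rho_{k}$ with $\iota_{k}$ the isometric inclusion; compactness then transfers in both directions because $\iota_{k}$ and $\rho_{k}$ are each other's adjoints. Second, in the final step you should say explicitly that the canonical map $F\to M(F_{k})$ is a $*$-homomorphism with kernel the annihilator ideal $F_{k}^{\perp}$; once the image lies in $F_{k}$ this map is a retraction onto $F_{k}$, forcing $F=F_{k}\oplus F_{k}^{\perp}$, and conversely a direct-sum decomposition $F=F_{k}\oplus J$ with $J$ an ideal gives $JF_{k}\subset J\cap F_{k}=0$, so every $f\in F$ acts on $F_{k}$ through its $F_{k}$-component. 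With these clarifications the proof stands.
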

From this it is easily proved that the generator $D$ of the one parameter group $u_{t}$ has compact resolvent in $\mathpzc{E}_{F}$. We now show that integral cocycles give rise to one parameter groups satisfying the SSA.
\begin{proposition} Let $\mathcal{G}$ be a locally compact \'{e}tale groupoid and $c:\mathcal{G}\rightarrow\Z$ be a cocycle. Then the one parameter group (\ref{1par}) generated by $c$ satisfies the SSA.
\end{proposition}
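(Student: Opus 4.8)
The plan is to reduce the statement to the compact-resolvent property already proved in Theorem \ref{KK}, by way of the criterion in Lemma \ref{cpt}. Since $c$ takes values in $\Z\subset\R$, the one parameter group \ref{1par} is periodic and hence descends to an action of the circle on $A=C^{*}(\mathcal{G})$, so the spectral subspace formalism of \cite{CNNR} applies. First I would identify the ingredients of that formalism with the groupoid picture. Because $\Z$ is discrete, each $c^{-1}(k)$ is a clopen subset of $\mathcal{G}$, and a function $f\in C_{c}(\mathcal{G})$ satisfies $u_{t}f=e^{ikt}f$ exactly when $\supp f\subset c^{-1}(k)$; thus the eigenspace $A_{k}$ is the closure of such functions, and the fixed point algebra is $F=C^{*}(\mathcal{H})$ for $\mathcal{H}=\ker c$. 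The conditional expectation $\rho=\frac{1}{2\pi}\int_{0}^{2\pi}u_{t}\,dt$ is then simply restriction to $\mathcal{H}$, i.e. $\rho(f)=f\cdot 1_{c^{-1}(0)}$, since $\frac{1}{2\pi}\int_{0}^{2\pi}e^{itc(\xi)}\,dt$ equals $1$ when $c(\xi)=0$ and $0$ otherwise.

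Next I would match the two modules. Comparing formula \ref{innH} in the special case $Z=\mathcal{G}$, $\sigma=d$ with the expression $\rho(f^{*}*g)$ shows that $\langle f,g\rangle_{\mathcal{H}}=\rho(f^{*}*g)$ on $C_{c}(\mathcal{G})$. Consequently the module $\mathpzc{E}_{\rho}$ of \cite{CNNR}, the completion of $A$ in the inner product $\langle a,b\rangle=\rho(a^{*}b)$, coincides with the correspondence module $\mathpzc{E}^{\mathcal{G}}$ of Theorem \ref{bimod}, and the operator $D$ of Proposition \ref{der} is precisely the generator of $u_{t}$ in this module. As $D$ acts by pointwise multiplication by the integer valued $c$, its spectrum is contained in $\Z$, and the operator $\rho_{k}=\frac{1}{2\pi}\int_{0}^{2\pi}e^{-ikt}u_{t}\,dt$ is exactly the spectral projection $1_{\{k\}}(D)$ onto the eigenvalue $k$.

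With these identifications the conclusion is immediate. By Lemma \ref{etalex} the cocycle $c$ is exact, so Theorem \ref{KK} applies and $a(1+D^{2})^{-1}\in\K_{C^{*}(\mathcal{H})}(\mathpzc{E}^{\mathcal{G}})$ for every $a\in A$. Since $(1+D^{2})^{-1}$ acts as the scalar $(1+k^{2})^{-1}$ on the range of $\rho_{k}$, I would write
\[a\rho_{k}=(1+k^{2})\,a(1+D^{2})^{-1}\rho_{k},\]
exhibiting $a\rho_{k}$ as the product of a compact operator and a bounded adjointable one, hence compact, for all $a\in A$ and $k\in\Z$. By the relevant direction of Lemma \ref{cpt}, compactness of all the $a\rho_{k}$ is equivalent to the SSA, which gives the proposition.

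It is worth noting where the content lies. The analytic heart, compactness of the resolvent, has already been carried out in Theorem \ref{KK} (resting in turn on the exactness supplied by Lemma \ref{etalex}), so the only genuinely new step is the bookkeeping of the middle two paragraphs: identifying $F$ with $C^{*}(\mathcal{H})$, $\rho$ with restriction to $\mathcal{H}$, and $\mathpzc{E}_{\rho}$ with $\mathpzc{E}^{\mathcal{G}}$. The main point to verify carefully there is that the inner product $\langle\,\cdot\,,\cdot\,\rangle_{\mathcal{H}}$ of Theorem \ref{bimod} really equals $\rho(f^{*}*g)$, for which one unwinds formula \ref{innH} in the case $Z=\mathcal{G}$; everything else is a direct translation.
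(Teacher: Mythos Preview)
Your argument is correct, but the route differs from the paper's. Both proofs begin the same way, identifying $F$ with $C^{*}(\mathcal{H})$ and $\rho_{k}$ with restriction to the clopen set $c^{-1}(k)$, and both finish by invoking Lemma~\ref{cpt}. The difference is in how compactness of $a\rho_{k}$ is obtained. The paper produces, for each $f\in C_{c}(\mathcal{G})$ and each $n$, an explicit function $f_{n}\in C_{c}(\mathcal{G}\ltimes_{r}\mathcal{G}/\mathcal{H})$ (built from $f$ and a cutoff on the compact set $K_{n}\subset\mathcal{G}/\mathcal{H}$, whose compactness comes from Lemmas~\ref{etalex} and~\ref{quotient}) and checks via formula~(\ref{action}) that $f_{n}$ acts as $f\rho_{n}$; since $C_{c}(\mathcal{G}\ltimes_{r}\mathcal{G}/\mathcal{H})\subset\K_{C^{*}(\mathcal{H})}(\mathpzc{E}^{\mathcal{G}})$, compactness follows. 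You instead feed Lemma~\ref{etalex} into Theorem~\ref{KK} to obtain compactness of $a(1+D^{2})^{-1}$, and then use the functional-calculus identity $a\rho_{k}=(1+k^{2})\,a(1+D^{2})^{-1}\rho_{k}$. Your approach is shorter and makes transparent that the proposition is really a corollary of Theorem~\ref{KK}; the paper's approach is more self-contained (it does not quote Theorem~\ref{KK}) and yields an explicit compact-operator representative for $f\rho_{n}$, which may be useful if one wants to avoid the full resolvent estimate or to work directly with kernels.
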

\begin{proof} Since $\mathcal{G}$ is \'{e}tale, the fixed point algebra $F=C^{*}(\mathcal{H})$ with $\mathcal{H}=\ker c$. The projection operators $\rho_{n}$ correspond to the restrictions $C_{c}(\mathcal{G})\rightarrow C_{c}(c^{-1}(n))$ induced by the inclusions of the closed subspaces $c^{-1}(n)\rightarrow\mathcal{G}$. Moreover $\mathcal{G}$ is a disjoint union
\[\mathcal{G}=\bigcup_{n\in\Z}c^{-1}(n).\] 
Fix $f\in C_{c}(\mathcal{G})$. The space $r(\supp f)\times \Z\cap\mathcal{G}/\mathcal{H}$ (again using \ref{ident}) is a disjoint union of the subsets 
\[K_{n}:=r(\supp f)\times \Z\cap [c^{-1}(n)].\] 
Since $c$ is exact (lemma \ref{etalex}) lemma \ref{quotient} implies each $K_{n}$ is a compact set.
Let $f\in C_{c}(\mathcal{G})$ and $e_{n}\in C_{c}(\mathcal{G}/\mathcal{H})$ be the function that is $1$ on $K_{n}$ and $0$ elsewhere. Define $f_{n}\in C_{c}(\mathcal{G}\rtimes\mathcal{G}/\mathcal{H})$ by $f_{n} (\xi_{1},[\xi_{2}]):=f(\xi_{1})e_{n}([\xi_{2}])$. From equation \ref{action} it follows that the function $f_{k}$ acts as
\[\begin{split}f_{n}\Psi(\eta)&=\int_{\mathcal{G}}f(\xi)e_{n}([\xi^{-1}\eta])\Psi(\xi^{-1}\eta)d\nu^{r(\eta)}\xi\\
&=\int_{\mathcal{G}}f(\xi)\rho_{n}\Psi(\xi^{-1}\eta)d\nu^{r(\eta)}. \end{split}\]
Thus, $f\rho_{n}$ is a compact operator, and by lemma \ref{cpt}, the SSA is satisfied.
\end{proof}

So, in this setting we are dealing with an $\mathbb{T}$-action, and 
the $\mathbb{T}$-equivariant unbounded bimodule induces and index map
\[K_{1}^{\mathbb{T}}(C^{*}(\mathcal{G}))\rightarrow K_{0}^{\mathbb{T}}(C^{*}(\mathcal{H})).\] 
In case a KMS-state for the action is given, e.g. if the cocycle comes from a continuous quasi-invariant measure, 
the restriction of this KMS-state to the fixed point algebra $C^{*}(\mathcal{H})$ gives a trace $\tau:C^{*}(\mathcal{H})\rightarrow\C$, which induces a map 
\[\tau_{*}:K_{0}^{\mathbb{T}}(C^{*}(\mathcal{H}))=K_{0}(C^{*}(\mathcal{H}))[z,z^{-1}]\rightarrow\C[z,z^{-1}] .\]
The resulting index map $K_{1}^{\mathbb{T}}(C^{*}(\mathcal{G}))\rightarrow \C[z,z^{-1}] $ is studied extensively in \cite{CNNR}. This is done by considering the mapping cone $M$ of the inclusion $F\rightarrow A$,
\[M:=\{f\in C_{0}([0,\infty),A):f(0)\in F\}.\]
It comes with an inclusion $\iota:C_{0}(\R,A)\rightarrow M$ as a $C^{*}$-subalgebra by identifying $\R$ with $(0,\infty)$.  An explicit unbounded bimodule $(\hat{\mathpzc{E}},\hat{D})$ for $(M,F)$ is constructed, giving a commutative diagram
\begin{diagram} K_{1}^{\mathbb{T}}(A) &\rTo^{\Ind_{D}} & K_{0}^{\mathbb{T}}(F)\\
\dTo^{\iota_{*}}&\ruTo_{\Ind_{\hat{D}}}&\\
K_{0}^{\mathbb{T}}(M) &  & \end{diagram}
The map \[K_{0}^{\mathbb{T}}(M)\xrightarrow{\Ind_{D}}K_{0}^{\mathbb{T}}(F)\xrightarrow{\tau_{*}}\C[z,z^{-1}]\] is identified as a kind of equivariant spectral flow (\cite{CNNR}, theorem 1.1). Via $\iota_{*}$ this corresponds to the index map defined above.
\subsection{The noncommutative torus}
Recall that the noncommutative 2-torus, topologically is the $C^{*}$-algebra of an irrational rotation action on the circle $S^{1}$. More precisely, for $\theta\in(0,1)$ consider the action of $\Z$ on $S^{1}$, given by rotation over an angle $2\pi\theta$:
\[e^{2\pi i t} \cdot n:=e^{2\pi i(t+n\theta)}.\]
We denote the corresponding crossed product groupoid by $S^{1}\rtimes_{\theta}\Z$. Lebesgue measure $\lambda$ is quasi-invariant for this action, and we get a representation of $C^{*}(S^{1}\rtimes_{\theta}\Z)$ as bounded operators on the Hilbert space $\mathpzc{H}:=L^{2}(S^{1}\rtimes_{\theta}\Z,\nu^{\lambda})$, where the Haar system $\nu$ is given by counting measures on the fiber. The subalgebra $C_{c}^{\infty}(S^{1}\times_{\theta}\Z)$ comes equipped with two canonical derivations:
\[\partial_{1}f(x,n):= in f(x,n),\quad \partial_{2}f(x,n):=\partial f(x,n).\]
Here $\partial f(x,n)$ is the function $\frac{\partial}{\partial x}f(x,n)$, the differentiation operator on $S^{1}$.
The operator
\[D:=\begin{pmatrix} 0 & -i\partial_{1}-\partial_{2}\\-i\partial_{1}+\partial_{2} &0\end{pmatrix},\]
is an odd, unbounded operator on $\mathpzc{H}_{\theta}:=\mathpzc{H}\oplus\mathpzc{H}$, with compact resolvent. Moreover, $C^{*}(S^{1}\rtimes_{\theta}\Z)$ acts on this graded Hilbert space by the diagonal representation. The commutators $[D,f]$ are bounded, for $f\in C_{c}^{\infty}(S^{1}\times_{\theta}\Z)$, which is dense in $C^{*}(S^{1}\rtimes_{\theta}\Z)$. The above described structure is the \emph{canonical spectral triple on} $C^{*}(S^{1}\rtimes_{\theta}\Z)$.

The trivial cocycle $c:S^{1}\rtimes_{\theta}\Z\rightarrow\Z$, given by projection on the first factor, gives us an unbounded bimodule $(\mathpzc{E}_{\theta},D_{c})$ via theorem \ref{KK}. As a linear space this is just $\ell^{2}(\Z)\tildeotimes C(S^{1})$, with right $C(S^{1})$ action defined by
\[(e_{n}\otimes f)g:=e_{n}\otimes f(g\cdot n),\] and the operator $D_{c}$ acts as $e_{n}\mapsto ne_{n}$, where $e_{n}$ are the canonical basis vectors for $\ell^{2}(\Z)$. Now consider the canonical spectral triple $(L^{2}(S^{1}), i\partial)$ on the circle algebra $C(S^{1})$. This triple is odd and its operator is given by ordinary differentiation. In the proof of the following proposition we will use  \begin{theorem}[\cite{Kuc}]\label{Kuc} Let
$(\mathpzc{E}\tildeotimes_{B}\mathpzc{F},D)\in\Psi_{0}(A,C)$. Supppose that
$(\mathpzc{E},D_{1})\in\Psi_{0}(A,B)$ and $(\mathpzc{F},D_{2})\in\Psi_{0}(B,C)$ are such
that
\begin{enumerate}\item For $x$ in some dense subset of $A\mathpzc{E}$, the operator
\[\left[\begin{pmatrix}D & 0\\ 0& D_{2}\end{pmatrix},\begin{pmatrix} 0 &
T_{x}\\T^{*}_{x} & 0\end{pmatrix}\right]\]
is bounded on $\Dom(D\oplus D_{2})$;
\item $\Dom D\subset \Dom D_{1}\tildeotimes 1$ ;
\item $\langle D_{1}x,Dx\rangle + \langle Dx,D_{1}x\rangle \geq\kappa\langle x,x\rangle$
for all $x$ in the domain of $D$.\end{enumerate} Then
$(\mathpzc{E}\tildeotimes_{B}\mathpzc{F},D)\in\Psi_{0}(A,C)$ represents the
Kasparov product of $(\mathpzc{E},D_{1})\in\Psi_{0}(A,B)$ and $(\mathpzc{F},D_{2})\in\Psi_{0}(B,C)$.\end{theorem}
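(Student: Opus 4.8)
The statement is Kucerovsky's criterion, so the plan is to reduce the unbounded hypotheses (1)--(3) to the Connes--Skandalis characterization of the Kasparov product in the bounded picture, applied to the bounded transforms $F:=\mathfrak{b}(D)$, $F_{1}:=\mathfrak{b}(D_{1})$ and $F_{2}:=\mathfrak{b}(D_{2})$. Recall that $(\mathpzc{E}\tildeotimes_{B}\mathpzc{F},F)$ represents the product of $(\mathpzc{E},F_{1})$ and $(\mathpzc{F},F_{2})$ precisely when two facts hold: first, $F$ is an $F_{2}$-connection, meaning that for $x$ in a dense subset the graded commutator
\[
\left[\begin{pmatrix} F & 0\\ 0 & F_{2}\end{pmatrix},\begin{pmatrix} 0 & T_{x}\\ T_{x}^{*} & 0\end{pmatrix}\right]
\]
lies in $\K_{C}((\mathpzc{E}\tildeotimes_{B}\mathpzc{F})\oplus\mathpzc{F})$; and second, the positivity $a[F_{1}\otimes 1,F]a^{*}\geq 0$ holds modulo $\K_{C}(\mathpzc{E}\tildeotimes_{B}\mathpzc{F})$ for all $a$ in a dense subalgebra of $A$, where the bracket is the graded (anti)commutator. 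My whole task is to produce these two facts from (1)--(3).

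First I would derive the connection condition from hypothesis (1). The integral representation $\mathfrak{b}(D)=\frac{2}{\pi}\int_{0}^{\infty}D(1+\lambda^{2}+D^{2})^{-1}\,d\lambda$, and the analogous one for $D_{2}$, let me rewrite the graded commutator of the diagonal bounded transform with the off-diagonal $T_{x}$ as an integral whose integrand sandwiches the unbounded graded commutator of $D\oplus D_{2}$ with $\begin{pmatrix} 0 & T_{x}\\ T_{x}^{*} & 0\end{pmatrix}$ between resolvents $(1+\lambda^{2}+D^{2})^{-1}$ and $(1+\lambda^{2}+D_{2}^{2})^{-1}$. Hypothesis (1) is exactly the boundedness of that unbounded commutator on the common domain, and here hypothesis (2), $\Dom D\subset\Dom(D_{1}\tildeotimes 1)$, guarantees that these manipulations take place on a genuine core. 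Because both $(\mathpzc{E}\tildeotimes_{B}\mathpzc{F},D)$ and $(\mathpzc{F},D_{2})$ are honest unbounded cycles, they have $C$-compact resolvent, so the resolvent factors upgrade boundedness of the integrand to $C$-compactness of the commutator of the transforms after integrating in $\lambda$.

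Next I would extract the positivity condition from hypothesis (3). Written form-theoretically, the estimate $\langle D_{1}x,Dx\rangle+\langle Dx,D_{1}x\rangle\geq\kappa\langle x,x\rangle$ says that the graded commutator $[D_{1}\tildeotimes 1,D]$ is bounded below by $\kappa$ on $\Dom D$. Applying the bounded transforms and once more inserting resolvent factors, this lower bound passes to $a[F_{1}\otimes 1,F]a^{*}\geq 0$ modulo $\K_{C}(\mathpzc{E}\tildeotimes_{B}\mathpzc{F})$ for $a$ ranging over the dense subalgebra, which is precisely the Connes--Skandalis positivity requirement. Combining the connection condition with this positivity, the Connes--Skandalis theorem identifies $(\mathpzc{E}\tildeotimes_{B}\mathpzc{F},F)$ with the Kasparov product class, which is the assertion.

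The hard part will be the analytic bookkeeping in the two middle steps: one must control, uniformly in the parameter $\lambda$, the commutators of $T_{x}$ and of $D_{1}\tildeotimes 1$ with the resolvents $(1+\lambda^{2}+D^{2})^{-1}$, verify that the resulting $\lambda$-integrals converge in operator norm, and confirm that the error terms are genuinely $C$-compact rather than merely bounded. It is precisely here that the $C$-compactness of $\mathfrak{r}(D)$ is used in an essential way; once these estimates are in place, the identification with the product class is formal.
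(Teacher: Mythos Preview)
The paper does not supply a proof of this statement: it is quoted verbatim as Theorem~\ref{Kuc} from Kucerovsky's paper \cite{Kuc} and used as a black box in the subsequent proposition on the noncommutative torus. So there is no ``paper's own proof'' to compare against.

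That said, your sketch is a faithful outline of Kucerovsky's actual argument in \cite{Kuc}: reduce to the Connes--Skandalis bounded criteria (connection plus positivity) via the bounded transform, use the integral representation of $\mathfrak{b}(D)$ to pass the unbounded commutator hypothesis (1) through resolvents to obtain the compact-commutator connection condition, and use the semiboundedness (3) to obtain the positivity modulo compacts. Your closing paragraph correctly identifies where the real work lies, namely the uniform-in-$\lambda$ resolvent estimates and the verification that error terms are compact rather than merely bounded. One small caveat: hypothesis (2) is not primarily a ``core'' condition for the connection step as you suggest, but is needed so that the form $\langle D_{1}x,Dx\rangle$ in (3) makes sense on $\Dom D$ and so that $F_{1}\otimes 1$ and $F$ can be compared on a common domain when extracting the positivity; you may want to relocate its role accordingly.
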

In order to be able to do so, we have to turn the given odd modules into equivalent even ones. The standard procedure to do so is to associate to $(\mathpzc{E},D)$ the graded module $\mathpzc{E}\oplus\mathpzc{E}$ with the operator $\begin{pmatrix} 0 & D\\ D& 0\end{pmatrix}$. This module comes with a right $\C_{1}$-action, implemented by the matrix $\begin{pmatrix} 0& i\\ i& 0\end{pmatrix}$. In our case this yields bimodules for the pairs $(C^{*}(S^{1}\rtimes \Z),C(S^{1})\otimes\C_{1})$ and $(C(S^{1}),\C_{1})$. Tensoring the last bimodule once again by $\C_{1}$, we get a bimodule for $(C(S^{1})\otimes\C_{1},\C_{2})$. The Kasparov product of these modules is an element of $KK_{0}(C^{*}(S^{2}\rtimes \Z, \C_{2})$, which by formal Bott periodicity is isomorphic to $KK_{0}(C^{*}(S^{2}\rtimes \Z, \C)=K^{0}(C^{*}(S^{2}\rtimes \Z))$.
\begin{proposition} The class $[(\mathpzc{H}_{\theta}, D)]\in K^{0}(C^{*}(S^{1}\rtimes_{\theta}\Z))$ is the Kasparov product of $[(\mathpzc{E}_{\theta},D_{c})]\in KK_{1}(C^{*}(S^{1}\rtimes_{\theta}\Z),C(S^{1}))$ with $[(L^{2}(S^{1}), i\partial)]\in K^{1}(C(S^{1}))$.
\end{proposition}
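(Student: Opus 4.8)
The plan is to realise $[(\mathpzc{H}_\theta,D)]$ as the interior Kasparov product over $C(S^1)$ of the two given odd cycles, by first passing to the even pictures through the $\C_1$-doubling described just above and then verifying the hypotheses of Kucerovsky's theorem \ref{Kuc}. Concretely, I would put $\mathpzc{E}=\mathpzc{E}_\theta$ with $D_1=D_c$, put $\mathpzc{F}=L^2(S^1)$ with $D_2=i\partial$, form the interior tensor product $\mathpzc{E}_\theta\tildeotimes_{C(S^1)}L^2(S^1)$ equipped with the candidate operator $D$, and check conditions (1)--(3) of \ref{Kuc}, concluding that $(\mathpzc{H}_\theta,D)$ represents the product in $KK_0(C^{*}(S^1\rtimes_\theta\Z),\C_2)\cong K^0(C^{*}(S^1\rtimes_\theta\Z))$.

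First I would identify the modules. Writing $\mathpzc{E}_\theta=\ell^2(\Z)\tildeotimes C(S^1)$ with the twisted right action $(e_n\otimes f)g=e_n\otimes f(g\cdot n)$, the rotation twist is absorbed upon tensoring over $C(S^1)$ with the Hilbert space $L^2(S^1)$ of the circle triple, yielding a natural isomorphism
\[\mathpzc{E}_\theta\tildeotimes_{C(S^1)}L^2(S^1)\cong \ell^2(\Z)\tildeotimes L^2(S^1).\]
On the other hand, since the crossed product groupoid $S^1\rtimes_\theta\Z$ has underlying space $S^1\times\Z$ with counting measure along the $\Z$-fibres, one has $\mathpzc{H}=L^2(S^1\rtimes_\theta\Z,\nu^\lambda)\cong\ell^2(\Z)\tildeotimes L^2(S^1)$, so the interior tensor product is exactly $\mathpzc{H}$; after doubling it becomes $\mathpzc{H}\oplus\mathpzc{H}=\mathpzc{H}_\theta$ with the correct grading and $\C_1$-action.

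Next I would match the operators. From $\partial_1 f(x,n)=inf(x,n)$ one reads off $D_c=-i\partial_1$ (multiplication by $n$), while $\partial_2=\frac{\partial}{\partial x}=-i(i\partial)$. The standard sum operator for the product of two odd cycles has off-diagonal entries $D_c\tildeotimes 1\pm i(1\tildeotimes i\partial)$, and substituting the two identifications turns these into $-i\partial_1\mp\partial_2$, which are precisely the off-diagonal entries of the given $D$. This pins down the candidate product operator as $D$ itself.

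Finally I would verify the three conditions of \ref{Kuc} for the even versions. Condition (2) is immediate: $D_c\tildeotimes 1$ and $1\tildeotimes i\partial$ act on independent tensor factors, so $\Dom D=\Dom(D_c\tildeotimes 1)\cap\Dom(1\tildeotimes i\partial)\subset\Dom(D_c\tildeotimes 1)$. Condition (3) follows because these two self-adjoint operators strongly commute, so that the cross terms vanish and the expression reduces to the manifest non-negativity of $\|(D_c\tildeotimes 1)x\|^2$. The substance of the argument is condition (1), the connection condition: for $x$ in the dense set $C_c^\infty(S^1\times_\theta\Z)\cdot\mathpzc{E}_\theta$ one must show that the supercommutator of $\begin{pmatrix}D&0\\0&i\partial\end{pmatrix}$ with $\begin{pmatrix}0&T_x\\T_x^*&0\end{pmatrix}$ is bounded on $\Dom(D\oplus i\partial)$. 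Here the $D_c$-part of $D$ commutes with the creation operators $T_x$ along the $\ell^2(\Z)$-direction, while the $1\tildeotimes i\partial$-part contributes the commutator $[i\partial,f]$, which is bounded for smooth $f$. I expect this last step --- correctly tracking the twisted right action of $C(S^1)$ through the interior tensor product together with the Clifford and sign bookkeeping, so that the bounded-commutator estimate closes --- to be the main obstacle; the module and operator identifications above are essentially routine.
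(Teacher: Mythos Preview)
Your proposal is correct and follows essentially the same route as the paper: identify the interior tensor product $\mathpzc{E}_\theta\tildeotimes_{C(S^1)}L^2(S^1)$ with $\mathpzc{H}$, pass to the doubled even picture, and verify Kucerovsky's three conditions, with condition~(1) being the only substantive computation and conditions~(2)--(3) following from the commutation of $\partial_1$ and $\partial_2$ on the common smooth core. One small correction to your sketch of condition~(1): the $D_c$-part of $D$ does not literally commute with the creation operators $T_x$ but rather contributes the bounded term $T_{D_c x}$ (since $D_c$ is a derivation on $C_c(\mathcal{G})$), and this is exactly the $D_1 x$ term that appears in the paper's explicit formula for the commutator.
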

\begin{proof} It is immediate that $\mathpzc{E}_{\theta}\tildeotimes_{C(S^{1})}L^{2}(S^{1})\cong\mathpzc{H}$. Thus, the graded $C^{*}$-module tensor product \[(\mathpzc{E}_{\theta}\oplus\mathpzc{E}_{\theta})\otimes \C_{1}\tildeotimes_{C(S^{1})\otimes\C_{1}}(L^{2}(S^{1})\oplus L^{2}(S^{1}))\otimes\C_{2})\cong \mathpzc{H}_{\theta}\oplus\mathpzc{H}_{\theta}^{\textnormal{op}},\]
with the standard odd grading, and $\C_{2}$-action implemented by the operators $\begin{pmatrix} 0& i\\i&0\end{pmatrix}$ and $\begin{pmatrix} 0& 1\\-1&0\end{pmatrix}$. We show that $D\oplus D^{\textnormal{op}}$ (which we will abusively denote by $D$ from now on) satisfies the conditions of Kucerovsky's theorem, with respect to \[D_{1}=\begin{pmatrix} 0 & D_{c}\\ D_{c}& 0\end{pmatrix},\quad\textnormal{ and}\quad  D_{2}=\begin{pmatrix} 0 & i\partial\\ i\partial& 0\end{pmatrix}\otimes 1.\] To prove (1), let $x\in C_{c}^{\infty}(S^{1}\rtimes\Z)\oplus C_{c}^{\infty}(S^{1}\rtimes\Z)$, which is dense in $\mathpzc{E}_{\theta}\oplus\mathpzc{E}_{\theta}$. It suffices to consider homogenous elements $x$ with support in $S^{1}\times\{n\}$ for $n\in\Z$. A tedious but straightforward computation yields that 
\[\left[\begin{pmatrix}D & 0\\ 0& T\end{pmatrix},\begin{pmatrix} 0 &
T_{x}\\T^{*}_{x} & 0\end{pmatrix}\right]\begin{pmatrix} e\otimes f\\ g\end{pmatrix}=\begin{pmatrix} D_{1}x\otimes g+(-1)^{\partial x}i\partial(x\cdot(-n))g \\ \langle i \partial(x\cdot(-n)),e\rangle f-(-1)^{\partial x}\langle D_{1} x,e\rangle f\end{pmatrix}.\]
We tacitly identify $x$ with the function on $S^{1}\times \{n\}$, an object we can differentiate, and therefore this expression is bounded, and extends to finite sums, i.e. to $C_{c}^{\infty}(S^{1}\times \Z)$. Condition $(2)$ is obvious, while $D_{1}\otimes 1$ only acts on the $\Z$ part of functions, whereas $D$ is essentially $D_{1}\otimes 1$ plus differentiation in the $S^{1}$ direction. Another tedious calculation (using that the derivations $\partial _{1}$ and $\partial _{2}$ commute on a common core) shows that 
\[\langle D_{1}x,Dx\rangle + \langle Dx,D_{1}x\rangle\geq \langle D_{1}x,D_{1}x\rangle + \langle D_{1}x,D_{1}x\rangle\geq 0, \]
for all $x$ in $\bigoplus_{i=1}^{4} C_{c}^{\infty}(S^{1}\rtimes\Z)$, which is a common core for $D$ and $D_{1}$.
\end{proof}

\end{document}